\newtheorem{theorem}{Theorem}[section]
\newtheorem{lemma}[theorem]{Lemma}
\newtheorem{proposition}[theorem]{Proposition}
\newtheorem{corollary}[theorem]{Corollary}
\theoremstyle{definition}
\newtheorem{definition}[theorem]{Definition}
\theoremstyle{remark}
\def\PSL{{\rm PSL}}
\def\SL{{\rm SL}}
\def\PO{{\rm PO}}
\def\PSO{{\rm PSO}}
\def\O{{\rm O}}
\def\PU{{\rm PU}}
\def\U{{\rm U}}
\def\GL{{\rm GL}}
\def\Kul{{\rm Kul}}
\def\CG{{\rm CG}}
\numberwithin{equation}{section}
\begin{document}

\title{Two dimensional Veronese groups with an invariant ball} 

\author{Angel Cano}
\address{ UCIM UNAM, Unidad Cuernavaca, Av. Universidad s/n. Col. Lomas de Chamilpa,
	C.P. 62210, Cuernavaca, Morelos, M\'exico.}
\email{angelcano@im.unam.mx}

\author{Luis Loeza}
\address{IIT UACJ, Av. del Charro 610 Norte,  Partido Romero, C.P. 32310, Ciudad Ju\'arez, Chihuahua, M\'exico}
\email{  luis.loeza@uacj.mx}

  \thanks{Partially supported by grants of the PAPPIT's project IA100112} 

\maketitle
\begin{abstract}
In this article we characterize the complex hyperbolic groups  that leave invariant a copy of the Veronese curve in $\Bbb{P}^2_{\Bbb{C}}$.
As a corollary  we   get that every discrete compact surface group in $\PO^+(2,1)$  admits  a deformation in 
$\PSL(3,\Bbb{C})$
with a non-empty region of discontinuity which is    not conjugate to a   complex  hyperbolic subgroup.  This provides  a way to construct new examples of Kleinian groups acting on $\Bbb{P}^2_\Bbb{C}$, see  
 \cite{CNS,CS1,SV3,SV1,SV2}.
\end{abstract}

\section*{Introduction}

Back in  the 1990s, Seade and Verjovsky began the  study of discrete groups acting on projective spaces, see \cite{SV3,SV1,SV2}.    Over the years,  new results  have been discovered, see \cite{CNS}.  However, it has been hard to construct groups acting on  $\Bbb{P}^2_\Bbb{C}$ which are neither  virtually affine  nor complex hyperbolic. In this article we use the irreducible representation $\iota$ of $\PSL(2,\Bbb{C})$ into   $\PSL(3,\Bbb{C})$ to produce such groups,  more precisely, we show: 

\begin{theorem}
Let $\Gamma\subset \PSL(2,\Bbb{C})$ be a discrete group of the first kind with non-empty discontinuity region in the Riemann sphere. Then the following claims are equivalent:
\begin{enumerate}
\item The group $\Gamma$ is Fuchsian.
\item  The group $\iota\Gamma$ is complex hyperbolic. 
\item The group $\iota\Gamma$ is  $\Bbb{R}$-Fuchsian.
\end{enumerate}

\end{theorem}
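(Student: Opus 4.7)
The cycle $(1) \Rightarrow (3) \Rightarrow (2) \Rightarrow (1)$ reduces the theorem to three implications, of which the first two are essentially tautological and the third carries the content. For $(1) \Rightarrow (3)$, after conjugation I may assume $\Gamma \subset \PSL(2,\mathbb{R})$; realising $\iota$ on $\mathrm{Sym}^2(\mathbb{C}^2)$, the standard real structure on $\mathbb{C}^2$ induces a real structure on $\mathbb{C}^3$ on which the symmetric form $B$ preserved by $\iota(\PSL(2,\mathbb{C}))$ restricts to a form of signature $(2,1)$, so that $\iota(\PSL(2,\mathbb{R})) = \PSO(2,1)$. The Hermitian form obtained by pairing with this real structure has signature $(2,1)$, defines an invariant complex ball, and its intersection with the projectivisation of the real subspace is a totally real totally geodesic plane stabilised by $\iota\Gamma$; this is exactly the $\mathbb{R}$-Fuchsian condition. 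The implication $(3) \Rightarrow (2)$ is immediate, since the invariant $\mathbb{R}$-plane lives inside the invariant complex ball.

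For the substantive implication $(2) \Rightarrow (1)$, the plan is to exploit the fact that $\iota\Gamma$ simultaneously preserves two structures on $\mathbb{C}^3$: the complex symmetric bilinear form $B$, whose projective zero locus is the Veronese curve, and the Hermitian form $H$ of signature $(2,1)$ which defines the invariant ball. Using the non-degeneracy of $B$, I define an antilinear map $J : \mathbb{C}^3 \to \mathbb{C}^3$ by the identity
\[
H(u,v) = B(u, Jv) \qquad \forall\, u,v \in \mathbb{C}^3 .
\]
A direct manipulation using $\iota(g)^T B\, \iota(g) = B$ and $\iota(g)^* H\, \iota(g) = H$ shows that $J$ commutes with every $\iota(g)$, $g \in \Gamma$; the Hermitian symmetry of $H$ furthermore forces a symmetry condition on $J$ with respect to $B$.

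Next I invoke Schur's lemma. Since $\Gamma$ is of the first kind with non-empty discontinuity region, $\Gamma$ is non-elementary, and hence the restricted representation $\iota|_{\Gamma}$ is irreducible on $\mathbb{C}^3$. Therefore $J^{2} = c I$ for some $c \in \mathbb{C}$; comparing $J \circ J^{2}$ with $J^{2} \circ J$ and using that $J$ is antilinear forces $c \in \mathbb{R}$, and the non-degeneracy of $H$ forces $c \neq 0$. Because $\dim_{\mathbb{C}} \mathbb{C}^{3} = 3$ is odd, $\mathbb{C}^{3}$ admits no quaternionic structure, which rules out $c < 0$; after rescaling, $J^{2} = I$. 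Thus the fixed set $V_{\mathbb{R}} = \{v : Jv = v\}$ is a real three-dimensional $\iota\Gamma$-invariant subspace with $\mathbb{C}^{3} = V_{\mathbb{R}} \otimes_{\mathbb{R}} \mathbb{C}$, and the Hermitian symmetry of $H$ forces $B|_{V_{\mathbb{R}}}$ to be a real symmetric bilinear form.

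To finish, $\iota\Gamma$ lies in the real orthogonal group of $B|_{V_{\mathbb{R}}}$; since $\iota\Gamma$ is discrete, infinite and non-elementary, it cannot be contained in a compact group, so $B|_{V_{\mathbb{R}}}$ has signature $(2,1)$, and therefore $\iota\Gamma$ is conjugate into $\PSO(2,1) = \iota(\PSL(2,\mathbb{R}))$. Since $\iota$ is injective, this gives $\Gamma \subset \PSL(2,\mathbb{R})$ up to conjugation, i.e.\ $\Gamma$ is Fuchsian. The step I expect to be the main obstacle is verifying irreducibility of $\iota|_{\Gamma}$ directly from the hypothesis ``first kind with $\Omega(\Gamma) \neq \emptyset$''---once this is secured, the remaining arguments are routine bilinear and sesquilinear algebra together with the parity input that excludes the quaternionic case.
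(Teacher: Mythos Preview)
Your argument is correct and takes a genuinely different route from the paper. For the substantive implication $(2)\Rightarrow(1)$ the paper proceeds geometrically: it shows that the group $Aut(BV)=\iota\PSL(2,\Bbb C)\cap\{g:gB=B\}$ is a semi-algebraic Lie group of positive dimension, invokes the classification of connected Lie subgroups of $\PSL(2,\Bbb C)$ to conclude that $\iota^{-1}Aut(BV)$ is conjugate into $Mob(\hat{\Bbb R})$, and then analyses the real curve $\partial B\cap Ver$ (showing via a degree-four equation and a subharmonic maximum-principle argument that $\psi^{-1}(\partial B\cap Ver)$ is a round circle and that $Ver$ meets both $B$ and its complement). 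Your approach bypasses all of this by working purely with invariant forms: the antilinear intertwiner $J$ defined by $H(u,v)=B(u,Jv)$, Schur's lemma on the irreducible $\iota|_\Gamma$, and the parity obstruction to a quaternionic structure on $\Bbb C^3$ produce an invariant real form directly. The paper's method yields finer geometric information about $Ver\cap\overline B$ as a by-product (used later in Section~5), whereas your method is shorter and more algebraic, and makes transparent why the odd dimension of $\mathrm{Sym}^2\Bbb C^2$ is what forces the real (rather than quaternionic) structure. One point worth making explicit in your write-up: once you have $\iota\Gamma$ preserving a real form $V_{\Bbb R}$ on which $B$ has signature $(2,1)$, you should note that any two such real structures are conjugate by an element of $\PSO(3,\Bbb C)=\iota\PSL(2,\Bbb C)$, so the conjugation carrying $\iota\Gamma$ into $\iota\PSL(2,\Bbb R)$ can be taken of the form $\iota(h)$, which is what lets you pull back through the injective $\iota$ to conclude $\Gamma$ is Fuchsian. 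The irreducibility you flag as the ``main obstacle'' is straightforward: an invariant line meets the conic $\{B=0\}=Ver$ in at most two points, giving a finite $\Gamma$-invariant set in $\Bbb P^1_{\Bbb C}$ via $\psi^{-1}$, and an invariant point yields an invariant line via $B$-orthocomplement.
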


Before we present our next result we should recall the following definition, see \cite{lab} page 30. A group $G$ is called a  {\it compact surface group}, if it is isomorphic to  the fundamental group of a compact orientable topological surface
$\Sigma_g$ of genus $g \geq 2$. 

\begin{theorem} \label{t:main2}
Let  $\Sigma_g$ a compact orientable topological surface
$\Sigma_g$ of genus $g \geq 2$  and  $\rho_0: \Pi_{1}(\Sigma_g)\rightarrow \PO^+(2,1)$ be a faithful discrete representation, where $\PO^+(2,1)$ denotes the projectivization of  identity component   of $\O(2,1)$.  
Then we can find a sequence of discrete faithful   representations $\rho_n:  \Pi_{1}(\Sigma_g)\rightarrow \PSL(3,\Bbb{C})$  such that:
\begin{enumerate}
	\item For  each $n\in \Bbb{N}$ the group $\rho_n(\Pi_1(\Sigma_g))=\Gamma_n$ is a complex  Kleinian group  whose action on $\Bbb{P}^2_{\Bbb{C}}$ is irreducible.
\item  For  each $n\in \Bbb{N}$ the group $\Gamma_n$  is not    conjugate to a subgroup  of $\PU(2,1)$ or
$\PSL(3,\Bbb{R})$.

\item The  sequence  of representations  $(\rho_n)$   converge algebraically   to $\Gamma_0$, {\it  i. e.} $lim_n \rho_n(\gamma)=h$ exists as a projective transformation for all $\gamma\in \Pi_1(\Sigma_g)$   and  
$\Gamma_0=\{h:lim_n\rho_n(\gamma)=h, \gamma\in \Pi_1(\Sigma_g) \}$ compare with the corresponding definition   in \cite{JM}.

\item The  sequence $(\Gamma_n)$ of compact surface groups  converge geometrically  to $\Gamma_0$, {\it  i. e.}  if for every subsequence $(j_n)$ of $(n)$ we get  
	$$\Gamma_0=\{g\in \PSL(3,\Bbb{C}):g=lim_{j_n}\rho_{j_n}(\gamma_n), \gamma_n\in \Pi_1(\Sigma_g)\},$$ compare with the corresponding definition   in \cite{JM}.

	\end{enumerate}

\end{theorem}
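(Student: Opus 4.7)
The plan is to view $\rho_0$ as a Fuchsian representation into $\PSL(2,\Bbb{C})$, deform it through quasi-Fuchsian representations, and then push these deformations into $\PSL(3,\Bbb{C})$ via the Veronese embedding $\iota$. Concretely, the identification $\PO^+(2,1)\cong\PSL(2,\Bbb{R})$ turns $\rho_0$ into a Fuchsian representation $\sigma_0:\Pi_1(\Sigma_g)\to\PSL(2,\Bbb{C})$. By Bers' simultaneous uniformization, the quasi-Fuchsian space $QF(\Sigma_g)\cong T(\Sigma_g)\times T(\overline{\Sigma_g})$ is an open complex submanifold of the character variety whose Fuchsian locus is a totally real submanifold of strictly smaller dimension. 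I choose a sequence $\sigma_n\in QF(\Sigma_g)$ of genuinely quasi-Fuchsian representations (no $\sigma_n$ conjugate into $\PSL(2,\Bbb{R})$) with $\sigma_n\to\sigma_0$, and set $\rho_n:=\iota\circ\sigma_n$, $\Gamma_n:=\rho_n(\Pi_1(\Sigma_g))$.

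Algebraic convergence (3) is then immediate from the continuity of $\iota$. Discreteness of $\Gamma_n$ and the irreducibility statement in (1) follow because $\iota$ is a proper embedding realising the unique irreducible $3$-dimensional representation of $\PSL(2,\Bbb{C})$, while the non-elementary subgroup $\sigma_n(\Pi_1(\Sigma_g))$ is Zariski dense in $\PSL(2,\Bbb{C})$. For the non-empty region of discontinuity in $\Bbb{P}^2_\Bbb{C}$, I use the geometry of the Veronese conic $V=\iota(\Bbb{P}^1_\Bbb{C})$: the limit set $\Lambda_n\subset\Bbb{P}^1_\Bbb{C}$ of $\sigma_n(\Pi_1(\Sigma_g))$ is a Jordan curve, so the union $K_n$ of tangent and secant lines to $V$ with endpoints on $\iota(\Lambda_n)$ is a proper closed $\Gamma_n$-invariant subset of $\Bbb{P}^2_\Bbb{C}$, and a standard equicontinuity / Kulkarni-type argument gives proper discontinuity of $\Gamma_n$ on $\Bbb{P}^2_\Bbb{C}\setminus K_n$.

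Claim (2) is where Theorem~1 is invoked. If some conjugate of $\Gamma_n$ lay in $\PU(2,1)$, Theorem~1 would force $\sigma_n(\Pi_1(\Sigma_g))$ to be Fuchsian, contradicting the choice of the sequence; the incompatibility with $\PSL(3,\Bbb{R})$ is handled analogously via $\iota^{-1}(\PSL(3,\Bbb{R}))=\PSL(2,\Bbb{R})$ up to conjugation. For the geometric convergence (4), the key input is that quasi-Fuchsian representations converge strongly---that is, algebraic and geometric convergence coincide---to their Fuchsian limits when no accidental parabolics appear; since $\iota$ is a proper embedding, strong convergence $\sigma_n\to\sigma_0$ in $\PSL(2,\Bbb{C})$ transfers to geometric convergence $\Gamma_n\to\Gamma_0$ in $\PSL(3,\Bbb{C})$.

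I expect two main obstacles. First, the hypothesis of Theorem~1---\emph{discrete of the first kind with non-empty discontinuity region}---must actually be checked on each $\sigma_n(\Pi_1(\Sigma_g))$: a genuinely quasi-Fuchsian group has limit set a proper Jordan curve in $\Bbb{P}^1_\Bbb{C}$, so a direct contrapositive argument or a mild extension of Theorem~1 covering the quasi-Fuchsian case will be required. Second, identifying the invariant set $K_n$ and extracting uniformity in $n$ sufficient to promote algebraic convergence to geometric convergence; the tangent-and-secant picture coming from the Veronese conic is the natural candidate, but the three-dimensional action can in principle create extra accumulation phenomena which must be ruled out.
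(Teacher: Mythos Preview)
Your proposal is correct and follows essentially the same route as the paper: pull $\rho_0$ back to a cocompact Fuchsian group $\Sigma\subset\PSL(2,\Bbb{R})$ via Corollary~\ref{l:conpo} and Lemma~\ref{t:rf}, deform $\Sigma$ to genuinely quasi-Fuchsian groups $\Sigma_n=q_n\Sigma q_n^{-1}$ with $q_n\to Id$ using Bers' theory, and push forward by $\iota$. The paper's own proof is in fact considerably terser than yours and does not spell out items (1), (3), (4) at all; it simply asserts that $\Gamma_n=\gamma_0\iota\Sigma_n\gamma_0^{-1}$ is complex Kleinian and not conjugate into $\PU(2,1)$ or $\PSL(3,\Bbb{R})$, citing the results of Sections~\ref{s:chvh}--\ref{s:riv}.

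Two small remarks. First, your worry about the ``first kind'' hypothesis in the introductory Theorem~1 is unnecessary: the operative results actually proved in the body (Theorem~\ref{t:liedim} and the theorem of Section~\ref{s:riv}) only assume \emph{non-elementary discrete}, which your quasi-Fuchsian $\Sigma_n$ certainly satisfy. Second, for the discontinuity region you propose removing tangent \emph{and secant} lines to $Ver$ over $\psi(\Lambda_n)$; the paper's Theorem~\ref{l:eq} shows that the tangent lines alone already give $\Lambda_{\Kul}(\iota\Sigma_n)=\Bbb{P}^2_\Bbb{C}\setminus \Eq(\iota\Sigma_n)$, so secant lines are superfluous and your set $K_n$ is strictly larger than needed---harmless for establishing that $\Gamma_n$ is Kleinian, but worth noting.
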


\begin{corollary}
There are complex Kleinian groups acting on $\Bbb{P}^2_{\Bbb{C}}$ which are neither conjugate to complex hyperbolic groups nor  virtually affine groups.
\end{corollary}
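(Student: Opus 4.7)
The plan is to apply Theorem~\ref{t:main2} to any cocompact Fuchsian representation of a surface group and then rule out the ``virtually affine'' case by a short algebraic argument. First I fix $g\geq 2$ and recall that a faithful discrete representation $\rho_0:\Pi_1(\Sigma_g)\to \PO^+(2,1)$ is classical: take the holonomy of any hyperbolic structure on $\Sigma_g$, using the identification $\PO^+(2,1)\cong \PSL(2,\Bbb{R})$. Theorem~\ref{t:main2} then yields a sequence of faithful discrete representations $\rho_n:\Pi_1(\Sigma_g)\to\PSL(3,\Bbb{C})$ whose images $\Gamma_n$ are complex Kleinian groups acting irreducibly on $\Bbb{P}^2_\Bbb{C}$ and are not conjugate to subgroups of $\PU(2,1)$. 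Fixing any $n$, the second property is exactly the statement that $\Gamma_n$ is not a complex hyperbolic group.

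It remains to show that $\Gamma_n$ is not virtually affine. I would argue by contradiction: suppose some finite-index subgroup $H\leq\Gamma_n$ preserves a projective line $L\subset\Bbb{P}^2_\Bbb{C}$. Then the $\Gamma_n$-orbit $\mathcal{O}=\Gamma_n\cdot L$ is finite, and the kernel $K\trianglelefteq\Gamma_n$ of the permutation representation $\Gamma_n\to\mathrm{Sym}(\mathcal{O})$ is a finite-index normal subgroup preserving every line in $\mathcal{O}$. Irreducibility of the $\Gamma_n$-action forbids $L$ itself from being $\Gamma_n$-invariant, so $\mathcal{O}$ contains at least two distinct lines, and $K$ lies in the simultaneous stabilizer of two of them. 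A direct computation in adapted projective coordinates identifies the stabilizer in $\PSL(3,\Bbb{C})$ of two distinct projective lines with an extension of a two-dimensional torus by a two-dimensional abelian unipotent radical; in particular it is solvable. Hence $K$, and therefore $\Gamma_n$, is virtually solvable.

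The conclusion is then immediate, as $\Gamma_n\cong\Pi_1(\Sigma_g)$ with $g\geq 2$ contains a nonabelian free subgroup and therefore cannot be virtually solvable. The only ingredient beyond Theorem~\ref{t:main2} is the solvability of the joint stabilizer of two projective lines, which I expect to be the main (and essentially only) obstacle; it is settled by the elementary matrix description above, after which the corollary follows.
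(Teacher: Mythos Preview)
Your argument is correct. The reduction via Theorem~\ref{t:main2} is exactly what the paper intends, and your contradiction argument for the ``not virtually affine'' part is sound: the kernel $K$ of the permutation action on the finite orbit $\Gamma_n\cdot L$ has finite index, irreducibility forces $|\Gamma_n\cdot L|\geq 2$, and the simultaneous stabilizer in $\PSL(3,\Bbb{C})$ of two distinct projective lines is, after choosing coordinates so that the lines are $\{[x:y:0]\}$ and $\{[x:0:z]\}$, the group of classes of matrices of the form
\[
\begin{pmatrix} a & b & c\\ 0 & d & 0\\ 0 & 0 & e\end{pmatrix},
\]
which is indeed solvable (abelian unipotent radical $\Bbb{C}^2$, abelian quotient a $2$-torus). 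Since $\Pi_1(\Sigma_g)$ contains a nonabelian free subgroup, the contradiction follows.

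The paper treats the corollary as immediate from Theorem~\ref{t:main2} and does not spell out this step. The argument most in the spirit of the paper is slightly different from yours: each $\Gamma_n$ is (a conjugate of) $\iota(\Sigma_n)$ for a quasi-Fuchsian group $\Sigma_n\subset\PSL(2,\Bbb{C})$, and any finite-index subgroup of $\Gamma_n$ is $\iota(H)$ for a finite-index, hence non-elementary, subgroup $H\leq\Sigma_n$; the Proposition in Section~\ref{s:gever} (``if $\Gamma\subset\PSL(2,\Bbb{C})$ is non-elementary then $\iota(\Gamma)$ has no invariant subspaces'') then shows directly that no finite-index subgroup of $\Gamma_n$ can preserve a line. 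So the paper's route leans on the specific Veronese construction and its irreducibility proposition, whereas your route is a general fact about irreducible representations of groups that are not virtually solvable. Your argument has the advantage of applying to any irreducible complex Kleinian surface group, not just those arising from $\iota$; the paper's argument avoids the stabilizer computation entirely.
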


This paper is organized as follows: in Section \ref{s:recall} we review some general facts and
introduce the notation used throughout the text. In Section \ref{s:gever} we describe  some properties of the Veronese curve which are useful for our purposes.  In Section \ref{s:chvh} we characterize the complex hyperbolic subgroups that leave invariant  a Veronese curve.   In Section \ref{s:riv} we depict  those real hyperbolic subgroups leaving invariant  a Veronese curve. Finally, in Section \ref{s:rep} we show that every discrete compact surface group in $\PO(2,1)^+$ admits  a deformation in $\PSL(3,\Bbb{C})$ which is not conjugate to a   complex  hyperbolic  subgroup and 
has non-empty Kulkarni region of discontinuity.

\section{Preliminaries}  \label{s:recall}

\subsection{Projective geometry}
The complex projective space $\mathbb{P}^2_{\mathbb {C}}$
is defined as
$$\mathbb{P}^{2}_{\mathbb {C}}=(\mathbb {C}^{3}\setminus \{0\})/\Bbb{C}^*,$$
where $\Bbb{C}^*$ acts by  the usual scalar multiplication.
  This is   a  compact connected  complex $2$-dimensional
manifold.
If $[\mbox{}]:\mathbb{C}^{3}\setminus\{0\}\rightarrow\mathbb{P}^{2}_{\mathbb{C}}$ is the quotient map, then a
non-empty set  $H\subset\mathbb{P}^2_{\mathbb{C}}$ is said to
be a line   if there is a  $\mathbb{C}$-linear
subspace  $\widetilde{H}$ in $\mathbb{C}^{3}$ of dimension $2$ such that $[\widetilde{H}\setminus \{0\}]=H$. If $p,q$ are distinct points then  $\overleftrightarrow{p,q}$ is the 
unique complex line passing through them. In this article,  $e_1,e_2,e_{3}$ will denote the standard basis for $\Bbb{C}^{3}$.

\subsection{ Projective  transformations }
 
 The group of projective automorphisms of $\mathbb{P}^{2}_{\mathbb{C}}$ is defined as
$$\PSL(3, \mathbb {C}) \,:=\, \GL({3}, \Bbb{C})/\Bbb{C}^*,$$
where $\Bbb{C}^*$ acts by the usual scalar multiplication. Then 
$\PSL(3, \mathbb{C})$ is a Lie group acting by biholomorphisms on $\Bbb{P}^2_{\Bbb{C}}$;  its
elements are called projective transformations.
We denote   by 
$[[\mbox{  }]]: \GL(3,\mathbb{C})\rightarrow \PSL(3,\mathbb{C})$    the quotient map. Given     
$ \gamma\in\PSL(3, \mathbb{C})$,  we  say that  
$\widetilde\gamma\in\GL(3,\mathbb {C})$ is a {\it lift} of $ \gamma$ if 
$[[\widetilde\gamma]]=\gamma$.\\

\subsection{Complex hyperbolic groups}

In the rest of this paper, we will be interested in studying those subgroups of $\PSL(3,\Bbb{C})$ that preserve  the unitary
complex ball.  We start  by considering the following Hermitian matrix: 
\[
H=
\left (
\begin{array}{lll}
  &&1\\
  &1&\\
1&&
\end{array}
\right ).
\]
We will set
\[
\U(2,1)=\{g\in \GL(3,\Bbb{C}):g^*Hg^*=H\}
\]
\[
\O(2,1)=\{g\in \GL(3,\Bbb{R}):g^t Hg=H\}
\]

and $\langle,\rangle:\Bbb{C}^{3}\rightarrow \Bbb{C}$ the Hermitian form induced by $H$. Clearly, $\langle,\rangle$ has signature 
$(2,1)$ and $\U(2,1)$  is the  group that preserves $\langle,\rangle$, see \cite{goldman}. The  projectivization $\PU(2,1)$ 
preserves the unitary complex ball:
\[
\Bbb{H}^2_\Bbb{C}=\{[w]\in \Bbb{P}^2_{\Bbb{C}}\mid \langle w,w\rangle <0\}.
\]
 Given a subgroup $\Gamma\subset\PU(2,1)$, we define the following notion of limit set, as in  \cite{CG}.
 
\begin{definition}

Let $\Gamma\subset \PU(2,1)$, then its Chen--Greenberg limit set   is  $\Lambda_{\CG}(\Gamma):= \bigcup \overline{\Gamma x}\cap \partial \Bbb{H}^2_\Bbb{C}$
where the union on the right runs over all points $x\in \Bbb{H}^2_\Bbb{C}$.
\end{definition}

 As in  the Fuchsian groups case,  $\Lambda_{\CG}(\Gamma)$ has either 1, 2 or infinitely many points.  A group is said to be non-elementary if  $\Lambda_{\CG}(\Gamma)$ has 
infinitely many points, and in that case it does not depend on the choice of orbit, {\it i.e.}  $\Lambda_{\CG}(\Gamma):= \overline{\Gamma x}\cap \partial \Bbb{H}^2_\Bbb{C}$
where $x\in \Bbb{H}^2_\Bbb{C}$ is any point.
\\
 
 \subsection{ Pseudo-projective transformations}

The space of linear transformations from $\Bbb{C}^{3}$ to $\Bbb{C}^{3}$, denoted by  $M(3,\Bbb{C})$,   is a complex linear space  of dimension
 $9$, where $\GL(3,\Bbb{C})$ is an open dense set in $ M(3,\Bbb{C})$. Then
$\PSL(3,\Bbb{C})$ is an open dense set in  
$QP(3,\Bbb{C})=(M(3,\Bbb{C})\setminus\{0 \})/\Bbb{C}^*$ called in \cite{CS} the space of pseudo-projective   maps. Let 
$\widetilde{M}:\mathbb{C}^{3}\rightarrow\mathbb{C}^{3}$ be a non-zero
 linear transformation. Let   $Ker(\widetilde M)$ be its kernel and 
  $Ker([[\widetilde M]])$
 denote its projectivization.
 Then  $\widetilde{M}$ induces a well defined map 
  $[[\widetilde M]]:\mathbb {P}^{2}_\mathbb {C}\setminus Ker([[\widetilde M]]) \rightarrow\mathbb {P}^{2}_\mathbb {C}$   by
 $$[[\widetilde M]]([v])=[\widetilde M(v)].$$
 The following  result provides a relation between convergence in $QP(3,\Bbb{C})$ viewed as points in a projective space and the 
 convergence viewed as functions.

\begin{proposition} [See \cite{CS}]  \label{p:completes}
Let  $(\gamma_m)_{m\in \mathbb {N}}\subset \PSL(3,\mathbb {C})$
be a sequence of distinct elements. Then: 
\begin{enumerate}
\item  There is a subsequence 
$(\tau_m)_{m\in \mathbb {N}}\subset(\gamma_m)_{m\in\mathbb{N}}$ and  
a $\tau_0\in M(3,\Bbb{C})\setminus\{0\}$ such that
 $\tau_m\xymatrix{\ar[r]_{m\rightarrow\infty}&}\tau_0$ as points in $QP(3,\Bbb{C})$.

\item If $(\tau_m)_{m\in \mathbb {N}}$ is the sequence  given by the previous part of this lemma, then 
$\tau_m\xymatrix{\ar[r]_{m\rightarrow\infty}&}\tau_0$, as   functions,   uniformly on compact sets of 
 $\mathbb{P}^n_\mathbb{C}\setminus Ker(\tau_0)$. Moreover, the equicontinuity set of  $\{\tau_m\vert m\in \Bbb{N} \}$ is $\Bbb{P}^n\setminus Ker(\tau_0)$.

\end{enumerate}
\end{proposition}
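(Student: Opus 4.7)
The plan is as follows. For part (1), observe that $M(3,\mathbb{C})$ is a complex vector space of dimension $9$, so $QP(3,\mathbb{C}) = (M(3,\mathbb{C})\setminus\{0\})/\mathbb{C}^*$ is homeomorphic to $\mathbb{P}^8_\mathbb{C}$ and in particular compact. Any sequence $(\gamma_m) \subset \PSL(3,\mathbb{C}) \subset QP(3,\mathbb{C})$ therefore admits a convergent subsequence with limit $\tau_0$, which by definition of $QP(3,\mathbb{C})$ corresponds to a non-zero element of $M(3,\mathbb{C})$.

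For part (2), I would choose lifts $\widetilde{\tau}_m \in M(3,\mathbb{C})$ of unit operator norm so that, passing to a further subsequence, $\widetilde{\tau}_m \to \widetilde{\tau}_0$ in $M(3,\mathbb{C})$ with $\widetilde{\tau}_0 \neq 0$ a lift of $\tau_0$. Given a compact set $K \subset \mathbb{P}^2_\mathbb{C} \setminus Ker(\tau_0)$, I lift it to the compact set $\widetilde{K} = [\,\cdot\,]^{-1}(K) \cap S^5$ in the unit sphere of $\mathbb{C}^3$. On $\widetilde{K}$ the continuous function $v \mapsto \|\widetilde{\tau}_0(v)\|$ is bounded below by some $\delta > 0$, and uniform convergence of the $\widetilde{\tau}_m$ on $S^5$ gives $\|\widetilde{\tau}_m(v)\| \geq \delta/2$ for $m$ large and all $v \in \widetilde{K}$. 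Consequently the images $\widetilde{\tau}_m(\widetilde{K})$ lie in a common compact subset of $\mathbb{C}^3 \setminus \{0\}$, where the projection $[\,\cdot\,]$ is Lipschitz; combining this with the uniform convergence of the lifts yields uniform convergence $\tau_m \to \tau_0$ on $K$. In particular, $\mathbb{P}^2_\mathbb{C} \setminus Ker(\tau_0)$ is contained in the equicontinuity set of $\{\tau_m\}$.

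The main obstacle is the reverse inclusion: no point of $Ker(\tau_0)$ should be an equicontinuity point of the family. To settle this I would exploit the fact that each $\tau_m$ is a bijection of $\mathbb{P}^2_\mathbb{C}$ while $\tau_0$ is singular, so its image has dimension at most $1$. Given $[v] \in Ker(\tau_0)$, I would pick two sequences $[v_n], [w_n] \in \mathbb{P}^2_\mathbb{C} \setminus Ker(\tau_0)$ converging to $[v]$ along transverse directions chosen so that $\tau_0([v_n])$ and $\tau_0([w_n])$ approach two distinct points of $\mathbb{P}^2_\mathbb{C}$. By the uniform convergence already established on neighborhoods of $[v_n]$ and $[w_n]$ that avoid $Ker(\tau_0)$, one has $\tau_m([v_n]) \to \tau_0([v_n])$ and $\tau_m([w_n]) \to \tau_0([w_n])$ as $m \to \infty$, and a standard diagonal extraction produces arbitrarily small neighborhoods of $[v]$ containing pairs of points whose $\tau_m$-images remain at a fixed positive distance apart, contradicting equicontinuity at $[v]$.
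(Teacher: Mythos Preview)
The paper does not supply a proof of this proposition; it is quoted from \cite{CS}, so there is no argument in the text to compare yours against. Your outline for part~(1) and for the uniform-convergence half of part~(2) is correct and standard.

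There is, however, a genuine gap in your argument for the reverse inclusion of the equicontinuity statement. You propose, for $[v]\in Ker(\tau_0)$, to choose approaching sequences $[v_n],[w_n]\notin Ker(\tau_0)$ whose $\tau_0$-images tend to two \emph{distinct} points. This works when $\widetilde{\tau}_0$ has rank~$2$: then $Ker(\tau_0)$ is a single point, $Im(\tau_0)$ is a line, and distinct lines through $[v]$ are collapsed by $\tau_0$ to distinct points of that image line. But when $\widetilde{\tau}_0$ has rank~$1$, the induced map $\tau_0$ is \emph{constant} on $\mathbb{P}^2_{\mathbb{C}}\setminus Ker(\tau_0)$, so $\tau_0([v_n])=\tau_0([w_n])$ for every choice of such sequences and your contradiction never materializes. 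Worse, in that case $\tau_0$ extends continuously (as a constant) across $Ker(\tau_0)$, so no argument using only the behaviour of the limit map off the kernel can possibly succeed.

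To treat the rank-$1$ case one must bring in more than the limit $\tau_0$. One route is to pass to the normalized inverses $\widetilde{\sigma}_m=\widetilde{\tau}_m^{-1}/\Vert\widetilde{\tau}_m^{-1}\Vert$, extract a limit $\widetilde{\sigma}_0$, and use $\widetilde{\tau}_0\widetilde{\sigma}_0=0$ to locate $Im(\sigma_0)\subset Ker(\tau_0)$; another is to study the restriction of the $\tau_m$ to the kernel line itself. A concrete example shows why: for $\widetilde{\tau}_m=\mathrm{diag}(m^2,m,1)$ one has $\tau_m([e_2])=[e_2]$ for all $m$, while $\tau_m([x,1,0])\to[e_1]$ for every $x\neq 0$. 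The failure of equicontinuity at $[e_2]$ is detected by comparing a point \emph{on} the kernel line with nearby points off it, not by comparing two off-kernel directions as in your scheme.
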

\subsection{Kulkarni's limit set}
When we look at the action of a group acting on a general topological space, in  general there is no natural notion of limit set. 
A nice starting point is Kulkarni's limit set. 

\begin{definition}[see   \cite{kulkarni} ] \label{d:lim}
 Let $\Gamma\subset\PSL(n+1,\mathbb{C})$ be a subgroup. We  define
 
\begin{enumerate}
\item the set 
$\Lambda(\Gamma)$ to be the closure of the set  of cluster points  of
$\Gamma z$  as  $z$ runs  over  $\mathbb{P}^n_{\mathbb{C}}$,
\item  the set $L_2(\Gamma)$ to be  the closure of cluster  points of 
$\Gamma K$  as $K$ runs  over all  the compact sets in
$\mathbb{P}^n_{\mathbb{C}}\setminus \Lambda(\Gamma)$,
\item and \textit{ Kulkarni's limit set } of $\Gamma$  to be  
$$\Lambda_{\Kul}(\Gamma)=\Lambda(\Gamma)\cup L_2(\Gamma),$$ 
\item \textit{ Kulkarni's discontinuity region} of $\Gamma$  to be
$$\Omega_{\Kul}(\Gamma)=\mathbb{P}^n_{\mathbb{C}}\setminus\Lambda_{\Kul}(\Gamma).$$
\end{enumerate}

\end{definition}

Kulkarni's limit set has the following properties.  For a more  detailed  discussion of this in the two-dimensional setting,
see \cite{CNS}. 

\begin{proposition}[See \cite{kulkarni,CS,BCN,CNS}] \label{p:pkg}
Let    $\Gamma\subset \PSL(3,\Bbb{C})$  be a complex  Kleinian group. Then:

\begin{enumerate}

\item The sets\label{i:pk2}
$\Lambda_{\Kul}(\Gamma),\,\Lambda(\Gamma),\,L_2(\Gamma)$
are  $\Gamma$-invariant closed sets. 

\item \label{i:pk3} The group $\Gamma$ acts properly 
  discontinuously on  $\Omega_{\Kul}(\Gamma)$. 

\item \label{i:pk4} If $\Gamma$ does not have any projective invariant subspaces, then  $$\Omega_{\Kul}(\Gamma)=Eq(\Gamma).$$ Moreover, $\Omega_{\Kul}(\Gamma)$ is complete Kobayashi hyperbolic and  is the largest open set  on which the group acts properly discontinuously.
\end{enumerate}
\end{proposition}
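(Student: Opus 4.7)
The plan is to treat the three parts in turn, using Proposition \ref{p:completes} as the main analytic tool: every sequence of distinct projective transformations subconverges, both as points of $QP(3,\Bbb{C})$ and as functions off the kernel, to a pseudo-projective map $\tau_0$, and on such limits the only accumulation that can occur on compact sets avoiding $\mathrm{Ker}([[\tau_0]])$ is into $\mathrm{Image}([[\tau_0]])$. The whole proposition is essentially an elaboration of this dichotomy.

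For part (1), I would argue by direct inspection of the definitions. If $\gamma_n z\to y$ then for any $g\in\Gamma$ one has $(g\gamma_n)z\to gy$, so $gy\in\Lambda(\Gamma)$; closure under limits is built into the definition. For $L_2$ one uses that $g$ maps compact subsets of $\Bbb{P}^2_{\Bbb{C}}\setminus\Lambda(\Gamma)$ to compact subsets of the same set, by the invariance just established. The invariance and closedness of $\Lambda_{\Kul}(\Gamma)=\Lambda(\Gamma)\cup L_2(\Gamma)$ then follow.

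For part (2), suppose a compact $K\subset\Omega_{\Kul}(\Gamma)$ meets infinitely many distinct translates $\gamma_n(K)$. Extract a subsequence converging in $QP(3,\Bbb{C})$ to $\tau_0$ via Proposition \ref{p:completes}. The key observation to install is that $\mathrm{Ker}([[\tau_0]])\subset\Lambda_{\Kul}(\Gamma)$ and $\mathrm{Image}([[\tau_0]])\subset\Lambda_{\Kul}(\Gamma)$: the first because points near the kernel get sent to infinity in every affine chart (so they are cluster points of preimages of a fixed compact target), and the second because the images of any compact set missing the kernel converge uniformly to a subset of $\mathrm{Image}([[\tau_0]])$, which therefore lies in $L_2$. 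Since $K$ lies in $\Omega_{\Kul}$, it avoids the kernel, so $\gamma_n(K)$ eventually lies in any prescribed neighbourhood of $\mathrm{Image}([[\tau_0]])\subset \Lambda_{\Kul}$, contradicting $\gamma_n(K)\cap K\neq\emptyset$.

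Part (3) is where I expect the real work to sit; the hypothesis ``no invariant projective subspace'' is used precisely to keep the kernels and images of limit pseudo-projective maps inside $\Lambda_{\Kul}$. The inclusion $\mathrm{Eq}(\Gamma)\subset\Omega_{\Kul}(\Gamma)$ is generic. For the reverse inclusion, given $x\in\Omega_{\Kul}(\Gamma)$ and any sequence $(\gamma_n)$ of distinct elements, the limit pseudo-projective map $\tau_0$ cannot have kernel passing through $x$ (otherwise $x\in\Lambda_{\Kul}$), so by the second clause of Proposition \ref{p:completes} the convergence is uniform on a neighbourhood of $x$; the absence of invariant subspaces is what prevents the kernel of $\tau_0$ from being a ``trapped'' projective subspace escaping $\Lambda_{\Kul}$. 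Once $\Omega_{\Kul}=\mathrm{Eq}$, maximality among open sets of proper discontinuity is automatic, since such a set must support a normal family of $\Gamma$-translates and hence lie in $\mathrm{Eq}(\Gamma)$. For Kobayashi hyperbolicity, I would show that the complement $\Lambda_{\Kul}(\Gamma)$ contains sufficiently many lines (again using the no-invariant-subspace hypothesis via the structure of kernels of limit maps) to embed $\Omega_{\Kul}$ into the complement of four lines in general position, which is complete Kobayashi hyperbolic; completeness in the induced metric is then inherited from properness of the $\Gamma$-action.
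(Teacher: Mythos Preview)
The paper does not prove this proposition at all: it is stated with the attribution ``See \cite{kulkarni,CS,BCN,CNS}'' and no argument is given in the text. So there is no ``paper's own proof'' to compare your sketch against; the authors are simply importing known results from the cited literature.

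That said, a couple of points in your outline deserve care if you intend it as an actual proof. First, the sentence ``the inclusion $\mathrm{Eq}(\Gamma)\subset\Omega_{\Kul}(\Gamma)$ is generic'' is not right as stated: equicontinuity on a neighbourhood does not by itself rule out accumulation of orbits (think of a compact group of rotations), so this inclusion already uses the irreducibility hypothesis and the structure of pseudo-projective limits, not just soft normality. In the cited references the argument runs the other way: one shows that under irreducibility every limit pseudo-projective map has kernel a line and image a point, both contained in $\Lambda_{\Kul}$, and then both inclusions follow simultaneously. Second, your last clause, that completeness of the Kobayashi metric ``is inherited from properness of the $\Gamma$-action,'' is not how the argument goes: properness of a group action has nothing to do with completeness of an intrinsic metric on the ambient open set. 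The actual mechanism in \cite{BCN,CNS} is that $\Lambda_{\Kul}(\Gamma)$ contains at least three (in fact four) complex lines in general position, and the complement of such a configuration in $\Bbb{P}^2_{\Bbb{C}}$ is complete Kobayashi hyperbolic by classical results on complements of hyperplane arrangements; $\Omega_{\Kul}(\Gamma)$ then inherits completeness as a domain inside it. Your sketch gestures at this but conflates two distinct ingredients.
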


\section{ The Geometry of the  Veronese Curve} \label{s:gever}

Now let us define
the   Veronese embedding. Set 
\[
\begin{array}{l}
\psi:\Bbb{P}^1_\Bbb{C}\rightarrow \Bbb{P}^2_\Bbb{C}\\
\psi([z,w])=[z^2,2zw, w^2].
\end{array}
\]

Let us  consider $\iota: \PSL(2,\Bbb{C})\rightarrow \PSL(3,\Bbb{C})$ given by
\[
\iota\left(\frac{az+b}{cz+d}\right )=\left [\left [
\begin{array}{lll}
a^2&ab&b^2\\
2ac&ad+bc&2bd\\
c^2&dc&d^2\\
\end{array}
\right ]\right ].
\]
 Trivially,  $\iota$ is well defined. Note that this map is  induced by the canonical action of $\SL(2,\Bbb{C})$ on the space of homogeneous polynomials of degree two in two complex variables.

\begin{lemma} \label{l:mor}
The map  $\iota$ is an injective group morphism.
\end{lemma}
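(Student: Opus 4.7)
The plan is to recognize $\iota$ as nothing but the symmetric square representation of $\SL(2,\Bbb{C})$ written in the particular basis $\{z^2, 2zw, w^2\}$ of binary quadratic forms, and to exploit the intertwining identity $\iota(g)\circ\psi=\psi\circ g$, where $\psi:\Bbb{P}^1_\Bbb{C}\to\Bbb{P}^2_\Bbb{C}$ is the Veronese embedding from the same section. This reduces everything to routine algebraic checks, once the conceptual origin is clarified.

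First I would check that $\iota$ is well defined on $\PSL(2,\Bbb{C})$. A Möbius transformation $\tfrac{az+b}{cz+d}$ has a two-fold lift to $\SL(2,\Bbb{C})$, obtained by passing from $(a,b,c,d)$ to $(-a,-b,-c,-d)$. Since every entry of the $3\times 3$ matrix displayed in the definition of $\iota$ is a homogeneous polynomial of degree two in $a,b,c,d$, both lifts produce the same matrix, so $\iota$ is unambiguous. (Equivalently, the scalar $-I\in\SL(2,\Bbb{C})$ is mapped to $I$.)

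For the homomorphism property the cleanest route is to verify the Veronese intertwining $\iota(g)\psi([z,w])=\psi(g\cdot[z,w])$ by expanding $(az+bw)^2$, $2(az+bw)(cz+dw)$ and $(cz+dw)^2$: the result is exactly $\iota(g)$ applied to the column vector $(z^2,2zw,w^2)^t$. Since $\psi(\Bbb{P}^1_\Bbb{C})$ is a smooth conic in $\Bbb{P}^2_\Bbb{C}$ containing four points in general position, the intertwining relation $\iota(gh)\circ\psi=\psi\circ gh=\iota(g)\circ\iota(h)\circ\psi$ determines $\iota(gh)=\iota(g)\iota(h)$ uniquely in $\PSL(3,\Bbb{C})$. (A purely computational alternative is to multiply the two $3\times 3$ matrices corresponding to $g$ and $h$ directly; one recovers the matrix attached to $gh$ after a short expansion.)

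For injectivity, suppose $\iota(g)$ is the identity of $\PSL(3,\Bbb{C})$. Then the lift displayed in the definition must be a nonzero scalar matrix, so the off-diagonal entries $ab,\; b^2,\; 2ac,\; 2bd,\; c^2,\; cd$ all vanish, which forces $b=c=0$. The diagonal then becomes $(a^2,ad,d^2)=(a^2,1,d^2)$ using $ad-bc=1$, and equating these entries yields $a^2=d^2=1$ with $ad=1$, hence $a=d=\pm 1$. Therefore $g=\pm I$ in $\SL(2,\Bbb{C})$, which is the identity of $\PSL(2,\Bbb{C})$.

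There is no real obstacle here: the only place requiring mild care is the normalization of the middle basis vector as $2zw$ rather than $zw$, since it is precisely that factor that produces the entries $2ac$, $2bd$, $ad+bc$ in the middle row and keeps the formula for $\iota$ polynomial in the coefficients of $g$. The whole lemma is essentially a bookkeeping exercise once one views $\iota$ as $\mathrm{Sym}^2$ passed to the projectivization.
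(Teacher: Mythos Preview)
Your proof is correct. The injectivity argument is essentially identical to the paper's. For the homomorphism property, however, you take a different route: the paper simply multiplies out the two $3\times 3$ matrices $\iota(A)$ and $\iota(B)$ and checks that the product equals $\iota(AB)$ entry by entry, whereas you first establish the intertwining identity $\iota(g)\circ\psi=\psi\circ g$ and then argue that two projective transformations agreeing on the Veronese curve (which contains four points in general position) must coincide. Your approach is more conceptual and explains \emph{why} $\iota$ is a morphism---it is the symmetric square representation---while the paper's is a direct and self-contained verification. One small caveat: the two auxiliary facts you invoke (a projective transformation fixing four points in general position is the identity, and $Ver$ contains such a quadruple) are stated and proved in the paper only \emph{after} the present lemma, so if you want to slot your argument into the paper's logical order you should either prove those facts first or fall back on the direct matrix computation you mention as an alternative. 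Neither fact depends on $\iota$ being a morphism, so there is no circularity, only a reordering issue.
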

\begin{proof}
Let 
 \[
A=\left [\left [
\begin{array}{ll}
a&b\\
c&d\\
\end{array}
\right ]\right ],\,B=\left [\left [
\begin{array}{ll}
 e&f\\
g&h\\
\end{array}
\right ]\right ]\in\PSL(2,\Bbb{C}).
\] 
Then 
\begin{tiny}
\[
\begin{array}{ll}
\iota (AB)
&=\iota\left [\left [
\begin{array}{ll}
 ae+bg&af+bh\\
ce+dg&cf+dh\\
\end{array}
\right ]\right ]\\
&=\left [\left [
\begin{array}{lll}
 (ae+bg)^2&(ae+bg)(af+bh)&( af+bh)^2\\
2(ae+bg)(ce+dg)&(cf+dh)(ae+bg)+(af+bh)(ce+dg)&2(af+bh)(cf+dh)\\
(ce+dg)^2&2(ce+dg)(cf+dh)&(cf+dh)^2\\
\end{array}
\right ]\right ]\\
&=\left [\left [
\begin{array}{lll}
 a^2&ab&b^2\\
2bc&ad+bc&2bd\\
c^2&cd&d^2\\
\end{array}
\right ]\right ]\left [\left [
\begin{array}{lll}
 e^2&ef&f^2\\
2eg&eh+fg&2fh\\
g^2&gh&h^2\\
\end{array}
\right ]\right ]\\
&=\iota(A)\iota(B).
\end{array}
\]
\end{tiny}
Therefore  $\iota$ is a group  morphism. Now suppose 
 $A=[[a_{ij}]]\in \PSL(2,\Bbb{C})$ is such  that $\iota(A)=Id$. Then
\[
\left [\left [
\begin{array}{lll}
 a^2_{11}&a_{11}a_{12}&a^2_{12}\\
2a_{12}a_{21}&a_{11}a_{22}+a_{12}a_{21} &2a_{12}a_{21}\\
a_{21}^2&a_{21}a_{22}&a_{22}^2\\
\end{array}
\right ]\right ]
=\left [\left [
\begin{array}{lll}
1&0&0\\
0&1&0\\
0&0&1\\
\end{array}
\right ]\right ]
\]
and so we conclude  $a_{12}=a_{21}=0$.  Since   $a_{11}a_{22}-a_{12}a_{21}=1$, we deduce $a_{11}^2=a_{22}^2=1$, {\it i.e.} $A=Id$, which concludes the proof.
\end{proof}

\begin{proposition}
The morphism  $\iota$  is  type preserving. In particular, if 
$\Gamma\subset\PSL(2,\Bbb{C})$ is a discrete subgroup, we must have  $\iota(\Gamma)$ is a discrete group such that each element is strongly loxodromic.
\end{proposition}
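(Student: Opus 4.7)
The plan is to reduce the verification of type-preservation to an explicit computation in Jordan canonical form. Since $\iota$ is a group morphism (Lemma \ref{l:mor}), and dynamical type is a conjugation invariant, it suffices to compute $\iota$ on one chosen lift $A\in \SL(2,\Bbb{C})$ in each $\PSL(2,\Bbb{C})$-conjugacy class.

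Concretely, I would take $A$ either diagonal, of the form $\mathrm{diag}(\lambda,\lambda^{-1})$, or a parabolic Jordan block, with $\epsilon=\pm 1$ on the diagonal and a $1$ above. Direct substitution into the defining formula for $\iota$ produces, in the first case, the diagonal matrix $\mathrm{diag}(\lambda^2,1,\lambda^{-2})$, and in the second case a single $3\times 3$ unipotent Jordan block. Reading off the spectral and Jordan data then yields type-preservation: when $|\lambda|=1$ with $\lambda\neq\pm 1$ the image is diagonalizable with three unit-modulus eigenvalues, hence elliptic; when $A$ is a Jordan block the image is unipotent with one $3\times 3$ block, hence parabolic; and when $|\lambda|\neq 1$ the three eigenvalues $\lambda^2,1,\lambda^{-2}$ have three distinct moduli, which is precisely the condition for $\iota(\gamma)$ to be strongly loxodromic in $\PSL(3,\Bbb{C})$.

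For the discreteness claim I would exploit that $\iota$ is an injective morphism of complex Lie groups whose differential at the identity is manifestly injective, since it is essentially the symmetric square of the defining representation of $\mathfrak{sl}(2,\Bbb{C})$. Hence $\iota$ is a complex analytic immersion whose image is a closed Lie subgroup of $\PSL(3,\Bbb{C})$ of the same dimension as $\PSL(2,\Bbb{C})$ --- one natural way to see closedness is to identify $\iota(\PSL(2,\Bbb{C}))$ with the stabilizer of the Veronese curve $\psi(\Bbb{P}^1_\Bbb{C})$ in $\PSL(3,\Bbb{C})$. It follows that $\iota$ is a homeomorphism onto its image, so discreteness of $\Gamma\subset \PSL(2,\Bbb{C})$ transfers to discreteness of $\iota(\Gamma)$ in $\PSL(3,\Bbb{C})$.

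The main subtlety I foresee lies in the phrase ``each element is strongly loxodromic''. The Jordan computation above shows that elliptic and parabolic elements of $\Gamma$ map to elliptic and parabolic, rather than strongly loxodromic, elements of $\iota(\Gamma)$, so the literal statement cannot hold for an arbitrary discrete $\Gamma$ containing torsion or parabolics. I read the conclusion as asserting the sharper type-refinement \emph{loxodromic $\mapsto$ strongly loxodromic}, which is exactly what the third branch of the case analysis delivers; no further work beyond the Jordan-form bookkeeping should be required.
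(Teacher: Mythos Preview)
Your type-preservation argument is essentially the paper's: both reduce to representatives $\mathrm{diag}(\lambda,\lambda^{-1})$ and a unipotent Jordan block, compute $\iota$ explicitly, and read off the dynamical type from the resulting eigenvalue/Jordan data. Your account is in fact more careful than the paper's, which simply writes down $\iota(A)$ and $\iota(B)$ and declares ``this shows that $\iota$ is type preserving'' without separating the elliptic and loxodromic diagonal cases or noting the strongly-loxodromic refinement.

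For discreteness the two arguments diverge. The paper argues directly at the level of sequences: if $\iota(A_n)\to Id$ with $A_n=[[a_{ij}^{(n)}]]$, then inspecting the entries of $\iota(A_n)$ forces $b_n,c_n\to 0$ and $a_n,d_n$ bounded and bounded away from $0$, so $A_n\to Id$ in $\PSL(2,\Bbb{C})$, contradicting discreteness of $\Gamma$. Your route is more structural: you use that $\iota$ is an injective Lie-group morphism with injective differential and closed image (as the stabilizer of the Veronese curve), hence a homeomorphism onto its image, so discreteness transfers automatically. Both are valid; the paper's computation is self-contained and avoids invoking the closed-subgroup theorem or the later identification of $\iota(\PSL(2,\Bbb{C}))$ with the stabilizer of $Ver$, while your argument explains \emph{why} discreteness must transfer and would apply unchanged to any closed Lie embedding.

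Your reading of the phrase ``each element is strongly loxodromic'' is the right one: the Jordan computation shows that the only honest conclusion is \emph{loxodromic $\mapsto$ strongly loxodromic}, and the paper's own calculation of $\iota(B)$ for $B$ parabolic confirms that parabolics do not become loxodromic. The statement as written is imprecise, and your interpretation matches what the proof (both yours and the paper's) actually establishes.
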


Here, by type preserving, we mean that  $\iota$ carries elliptic elements into elliptic elements, and similarly for 
loxodromic and parabolic elements.

\begin{proof}
Consider 
\[
A=
\left [\left [
\begin{array}{ll}
a&0\\
0&a^{-1}
\end{array}
\right ]\right ],\,B=
\left [ \left [
\begin{array}{ll}
1&1\\
0&1
\end{array}
\right ]\right ]\in\PSL(2,\Bbb{C}).
\] 
A straightforward calculation shows  
\[
\iota(A)=
\left [\left [
\begin{array}{lll}
a^2&0&0\\
0&1&0\\
0&0&a^{-2}
\end{array}
\right ]\right ],\,\iota(B)=
\left [ \left [
\begin{array}{lll}
1&1&1\\
0&1&0\\
0&0&1\\
\end{array}
\right ]\right ].
\]
This shows that $\iota$ is type preserving.
Now let
\[
A_n=
\left [\left [
\begin{array}{ll}
a_n&b_n\\
c_n&d_n\\
\end{array}
\right ]\right ]\in\PSL(2,\Bbb{C})
\]
be a sequence such that  $\iota(A_n)\xymatrix{\ar[r]_{n\rightarrow\infty}&}Id$. Then

\[
 \left [\left [
\begin{array}{lll}
a^2_n&a_nb_n&b^2_n\\
2a_nc_n& a_nd_n+b_nc_n&2b_nd_n\\
c_n^2&d_nc_n&d^2_n\\
\end{array}
\right ] \right ]
\xymatrix{\ar[r]_{n\rightarrow\infty}&}Id.
\]
Therefore the $(a^2_n),(d^2_n)$ are bounded and bounded away from $0$,   $b_n^2\xymatrix{\ar[r]_{n\rightarrow\infty}&}0$, and    
$c_n^2\xymatrix{\ar[r]_{n\rightarrow\infty}&}0$, which is a contradiction.    
\end{proof}

\begin{lemma}
Let $g\in\PSL(3,\Bbb{C}) $ be such that  $g$ fixes four points in general  position. Then $g=Id$.
\end{lemma}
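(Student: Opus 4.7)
The plan is to reduce to a normal form via conjugation, so that the four fixed points become the standard projective frame, and then observe that only the identity fixes this frame.

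First, I would exploit the well-known fact that the projective group $\PSL(3,\mathbb{C})$ acts transitively on ordered projective frames, i.e.\ on ordered $4$-tuples of points in general position. Concretely, given four points $p_1, p_2, p_3, p_4$ in general position, I choose a projective transformation $h$ sending the first three to $e_1, e_2, e_3$. Writing $h(p_4) = [a : b : c]$, the general position hypothesis (no three of the $p_i$ collinear) forces $a, b, c$ all nonzero, since otherwise $h(p_4)$ would lie on one of the coordinate lines $\overleftrightarrow{e_i, e_j}$. Rescaling the basis vectors by $1/a, 1/b, 1/c$ respectively — itself a projective transformation — sends $h(p_4)$ to $[1:1:1]$ while preserving $e_1, e_2, e_3$.

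After conjugating by this composite transformation, I may assume that $g$ fixes $e_1, e_2, e_3$ and $[1:1:1]$. Pick any lift $\widetilde g \in \GL(3,\mathbb{C})$. The condition $g(e_i) = e_i$ means $\widetilde g(e_i) = \lambda_i e_i$ for some $\lambda_i \in \mathbb{C}^*$, so $\widetilde g = \operatorname{diag}(\lambda_1, \lambda_2, \lambda_3)$. The remaining condition $g([1:1:1]) = [1:1:1]$ yields $\widetilde g (1,1,1)^t = \mu (1,1,1)^t$ for some $\mu \in \mathbb{C}^*$, forcing $\lambda_1 = \lambda_2 = \lambda_3 = \mu$. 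Thus $\widetilde g$ is a scalar matrix and $g = \mathrm{Id}$ in $\PSL(3,\mathbb{C})$.

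There is really no serious obstacle here; the only point demanding care is justifying that the ambient conjugation is harmless (fixing four points in general position is a property invariant under conjugation, since conjugation permutes such configurations) and verifying that general position truly guarantees $abc \neq 0$ in the normalization step. Everything else is a short linear-algebra computation.
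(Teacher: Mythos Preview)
Your proof is correct and follows essentially the same approach as the paper: normalize so that three of the points are $e_1,e_2,e_3$, deduce that any lift of $g$ is diagonal, and then use the fourth point (with all coordinates nonzero, by general position) to force the diagonal entries to coincide. The only cosmetic difference is that you further normalize the fourth point to $[1:1:1]$, whereas the paper leaves it as an arbitrary $[b_1,b_2,b_3]$ with $b_1b_2b_3\neq 0$.
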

\begin{proof}
We can assume that   the four  points in general position  fixed by $g$ are $\{e_1,e_2,e_3,p\}$. Then 
\[
g=
\left [\left [
\begin{array}{lll}
a_1&0&0\\
0&a_2&0\\
0&0& a_3
\end{array}
\right ]\right ].
\]

Since  $p,e_1,e_2,e_3$ are in general position, we conclude    $p=[b_1,b_2,b_3]$ where   $b_1b_2b_3\neq 0$. On the other hand, since  $p$ is fixed we deduce  
$$[b_1,b_2,b_3]=[a_1b_1,a_2b_2,a_3b_3],$$
therefore there is an $r\in\Bbb{C}^*$ such that  $b_i=ra_ib_i$. In consequence $a_1=a_2=a_3$, which concludes the proof.   
\end{proof}

\begin{lemma}
The Veronese curve has  four points in general position.
\end{lemma}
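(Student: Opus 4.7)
The plan is to exhibit four explicit points on the Veronese curve and check that no three of them are collinear, which is precisely the notion of ``general position'' used in the preceding lemma. The most conceptual route is to first observe that the image of $\psi$ lies on the irreducible conic
\[
C=\{[X:Y:Z]\in\Bbb{P}^2_\Bbb{C}\mid Y^2=4XZ\},
\]
since $(2zw)^2=4z^2w^2$. A short direct computation (or an appeal to the injectivity of $\iota$ from Lemma \ref{l:mor}, since $\iota$ acts on $\Bbb{P}^2_\Bbb{C}$ through the Veronese embedding) shows that $\psi$ is injective, so the image is all of $C$ and in particular has infinitely many points.

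The key step is then the Bezout-type remark that any projective line meets the irreducible conic $C$ in at most two points. Consequently, picking any four distinct points of the Veronese curve immediately forces no three of them to be collinear, which gives the lemma.

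If one prefers to avoid invoking Bezout, I would instead work with the four explicit parameters $[1:0]$, $[0:1]$, $[1:1]$, $[1:-1]$, whose images under $\psi$ are $e_1=[1:0:0]$, $e_3=[0:0:1]$, $[1:2:1]$ and $[1:-2:1]$. Non-collinearity of any three is then reduced to verifying that four $3\times 3$ determinants of homogeneous coordinates are nonzero; for triples of the form $\psi([t_i:1])=[t_i^2:2t_i:1]$ the determinant is a Vandermonde-type expression proportional to $\prod_{i<j}(t_i-t_j)$, and the triples involving $e_1=\psi([1:0])$ reduce to the same kind of computation.

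There is no genuine obstacle here; the only thing to decide is the style of presentation. The substance of the statement is just the standard fact that a smooth conic in $\Bbb{P}^2_\Bbb{C}$ contains no three collinear points.
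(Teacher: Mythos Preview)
Your proposal is correct. Your second (explicit) route is essentially the paper's proof: the paper exhibits the four points $[1,0,0]$, $[0,0,1]$, $[1,2,1]$, $[1,2i,-1]$ (i.e.\ the images of $[1:0]$, $[0:1]$, $[1:1]$, $[1:i]$ rather than your $[1:-1]$) and verifies non-collinearity by computing the two nontrivial $3\times 3$ determinants, the two triples containing both $e_1$ and $e_3$ being obvious.

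Your first route, via the observation that $\psi(\Bbb{P}^1_\Bbb{C})$ lies on the irreducible conic $Y^2=4XZ$ and hence (by B\'ezout) meets every line in at most two points, is a genuinely different and more conceptual argument. It has the advantage of showing that \emph{any} four distinct points of the Veronese curve are in general position, not just a particular quadruple; this is in fact closer in spirit to how the lemma is used later (e.g.\ in Lemma~\ref{l:3gen}, where three arbitrary tangent lines are shown to be in general position by a Vandermonde computation of exactly the kind you describe). The paper's explicit approach, on the other hand, is entirely self-contained and avoids invoking B\'ezout for this small step.
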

\begin{proof}
A straightforward calculation shows  that $[1,0,0], [0,0,1],[1,2,1],[1,2i,-1]$ are points on the 
Veronese curve. In order to conclude the proof, it is enough to observe 
\[
\left \vert
\begin{array}{lll}
1&1&1 \\
0&2&2i\\
0&1& -1
\end{array}\right\vert=-2-2i,\,\hbox{and}\,\left\vert
\begin{array}{lll}
0&1&1\\
0&2&2i\\
1&1&-1
\end{array}\right\vert=-2+2i.
\]
\end{proof}

\begin{proposition}
The subgroup of  $\PSL(3,\Bbb{C})$ leaving  $\psi(\Bbb{P}_\Bbb{C}^1)$ invariant  is 
$\iota(\PSL(2,\Bbb{C}))$.
\end{proposition}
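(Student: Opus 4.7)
The plan is to prove the two inclusions, the nontrivial one being that every $g\in\PSL(3,\Bbb{C})$ preserving $V:=\psi(\Bbb{P}^1_\Bbb{C})$ lies in $\iota(\PSL(2,\Bbb{C}))$.

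First I would verify the easy direction $\iota(\PSL(2,\Bbb{C}))\subseteq\mathrm{Stab}(V)$ by a direct computation yielding the intertwining identity
\[
\iota(A)\,\psi([z,w])=\psi(A[z,w]),\qquad A\in\PSL(2,\Bbb{C}),\ [z,w]\in\Bbb{P}^1_\Bbb{C}.
\]
This is just a restatement of the fact that $\iota$ is the representation of $\SL(2,\Bbb{C})$ on binary quadratic forms in $(z,w)$, which was already invoked when $\iota$ was defined.

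For the reverse inclusion, let $g\in\PSL(3,\Bbb{C})$ preserve $V$. Since $\psi$ is a holomorphic bijection of $\Bbb{P}^1_\Bbb{C}$ onto the smooth conic $V$ with nowhere vanishing derivative, it is a biholomorphism $\Bbb{P}^1_\Bbb{C}\to V$. Consequently $\psi^{-1}\circ g|_V\circ\psi$ is an automorphism of $\Bbb{P}^1_\Bbb{C}$, i.e.\ a M\"obius transformation $A\in\PSL(2,\Bbb{C})$. Combining this with the intertwining identity above gives $\iota(A)\circ\psi=g\circ\psi$, so $\iota(A)$ and $g$ coincide on every point of $V$.

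To promote this pointwise agreement on $V$ to a global equality on $\Bbb{P}^2_\Bbb{C}$, set $h:=\iota(A)^{-1}g$. Then $h$ fixes every point of $V$; in particular it fixes the four points in general position on $V$ exhibited by the previous lemma. The lemma immediately before that then forces $h=\mathrm{Id}$, yielding $g=\iota(A)\in\iota(\PSL(2,\Bbb{C}))$.

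The only conceptually non-routine step will be the second one, namely converting the set-theoretic hypothesis $g(V)=V$ into the algebraic statement that $g|_V$ is the restriction of some $\iota(A)$. This rests on $\psi$ being a biholomorphism onto $V$ together with the classical identification $\mathrm{Aut}(\Bbb{P}^1_\Bbb{C})=\PSL(2,\Bbb{C})$; everything else is the bookkeeping calculation of the first paragraph together with the two preceding rigidity lemmas.
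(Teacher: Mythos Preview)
Your proof is correct and follows essentially the same approach as the paper: establish the intertwining identity $\iota(A)\circ\psi=\psi\circ A$ for the easy inclusion, then for the reverse inclusion pull back $g|_V$ through $\psi$ to obtain $A\in\PSL(2,\Bbb{C})$, and finally use the two preceding lemmas (four points in general position on $Ver$, and rigidity under fixing such a configuration) to upgrade agreement on $V$ to global equality. The paper's argument is identical in structure and in its appeal to the auxiliary lemmas.
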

\begin{proof}
First, let us prove that   $Ver=\psi(\Bbb{P}_\Bbb{C}^1)$ is invariant under  $\iota(\PSL(2,\Bbb{C}))$. Let $A=[[a_{ij}]]\in \PSL(2,\Bbb{C})$. Then 
\[
\iota
\left [\left [ 
\begin{array}{ll}
a_{11}&a_{12}\\
a_{21}&a_{22}\\
\end{array}
\right ]\right ]
\left [
\begin{array}{l}
x\\
2xy\\
y^2\\
\end{array}
\right ]=
\left [
\begin{array}{l}
(a_{11}x+a_{12}y)^2 \\
2(a_{21}x+a_{22}y)(a_{11}x+a_{12}y)\\
(a_{21}x+a_{22}y)^2\\
\end{array}
\right ],
\]
and so  $Ver$ is invariant under  $\iota \PSL(2,\Bbb{C})$ and 
 the following diagram commutes.
\begin{equation} \label{e:aut}
\xymatrix{
\Bbb{P}_\Bbb{C}^1 \ar[r]^\gamma \ar[d]^\psi         & \Bbb{P}_\Bbb{C}^1 \ar[d]^\psi \\
Ver \ar[r]^{\iota \gamma}  & Ver
} 
\end{equation}
Now let $\tau\in\PSL(3,\Bbb{C})$ be an element which leaves $Ver$ invariant. Define
\[
\begin{array}{l}
\widetilde{\tau}:\Bbb{P}_\Bbb{C}^1\rightarrow \Bbb{P}_\Bbb{C}^1 \quad.\\
 \widetilde{\tau}(z)=\psi^{-1}(\tau(\psi(z))).
\end{array}
\]
Clearly $\widetilde{\tau}$ is well defined and  biholomorphic, thus 
$\widetilde{\tau}\in \PSL(2,\Bbb{C})$ and the following diagram commutes.
\begin{equation*}
\xymatrix{
\Bbb{P}_\Bbb{C}^1 \ar[r]^{\widetilde \tau}\ar[d]^\psi         & \Bbb{P}_\Bbb{C}^1 \ar[d]^\psi \\
Ver \ar[r]^{\tau}  & Ver
} 
\end{equation*}
From diagram  \ref{e:aut}, we conclude that  
$\tau\mid_{Ver}=\iota\widetilde\tau\mid_{Ver}$. Since  the  Veronese curve has  four points in general   position, we conclude  $\tau=\iota\widetilde \tau$ in $\Bbb{P}_\Bbb{C}^2$, which concludes the proof.
\end{proof}

\begin{lemma}\label{l:ltanver}
Given $[1,k]\in \Bbb{P}^1_\Bbb{C}$,  the tangent  line to $Ver$ at $\psi[1,k]$, denoted $T_{\psi[1,k]}Ver$, is given by
$$T_{\psi[x,y]}Ver=\{[x,y,z]\in \Bbb{P}^2_\Bbb{C}\vert z=ky-k^2x\}.$$
\end{lemma}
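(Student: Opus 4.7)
The plan is a routine computation in projective differential geometry: for a smoothly parametrized projective curve given by a lift $\widetilde\phi:U\rightarrow \Bbb{C}^3\setminus\{0\}$, the projective tangent line at $[\widetilde\phi(t_0)]$ is the projectivization of the $2$-plane in $\Bbb{C}^3$ spanned by $\widetilde\phi(t_0)$ and $\widetilde\phi'(t_0)$. I would apply this to the explicit parametrization of $Ver$ coming from $\psi$.

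First I would pass to the affine chart $\{[1,t]:t\in\Bbb{C}\}\subset\Bbb{P}^1_\Bbb{C}$, in which the lift $\widetilde\psi(t)=(1,2t,t^2)$ of $\psi$ satisfies $\widetilde\psi(k)=(1,2k,k^2)$ and $\widetilde\psi'(k)=(0,2,2k)$. These two vectors are linearly independent in $\Bbb{C}^3$, so $Ver$ is smooth at $\psi[1,k]$ and its tangent line $T_{\psi[1,k]}Ver$ is the projectivization of the $2$-plane $\widetilde T\subset\Bbb{C}^3$ spanned by $(1,2k,k^2)$ and $(0,1,k)$.

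Next I would identify the linear form $ax+by+cz$ (unique up to scalar) that cuts out $[\widetilde T\setminus\{0\}]$ inside $\Bbb{P}^2_\Bbb{C}$. Imposing vanishing on $(0,1,k)$ forces $b=-ck$; imposing vanishing on $(1,2k,k^2)$ together with this relation forces $a=k^2c$. Taking $c=-1$ yields the equation $z=ky-k^2x$, which is the stated description of the tangent line.

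The computation presents no real obstacle. The only thing worth noting is a minor notational mismatch in the statement: the hypothesis fixes the point $[1,k]$ while the displayed equation uses $T_{\psi[x,y]}Ver$ with free coordinates $[x,y,z]$, and one should read the formula as describing the tangent line to $Ver$ at $\psi[1,k]$ in the ambient homogeneous coordinates $[x,y,z]$ of $\Bbb{P}^2_\Bbb{C}$.
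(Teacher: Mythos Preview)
Your proof is correct and follows essentially the same approach as the paper: both compute the tangent line by differentiating the parametrization of $Ver$ and identifying the resulting $2$-plane. The only cosmetic difference is that you work directly with the lift $\widetilde\psi(t)=(1,2t,t^2)$ in $\Bbb{C}^3$, whereas the paper first passes to the affine chart $(y/x,z/x)$ before reading off the same equation $z=ky-k^2x$.
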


\begin{proof} Let us consider the  chart 
	$(W_1=\{[x,y,z]\in\Bbb{P}^2_\Bbb{C}\vert x\neq 0\},\phi_1:W_1\rightarrow\Bbb{C}^2) $ of 
	$\Bbb{P}^2_\Bbb{C}$ where $\phi_1[x,y,z]=(yx^{-1},zx^{-1})$ and 
$(W_2=\{[x,y]\in\Bbb{P}^1_\Bbb{C}\vert x\neq 0\},\phi_2:W_2\rightarrow\Bbb{C}^1)$ of $\Bbb{P}^1_\Bbb{C}$ where $\phi_1[x,y]=yx^{-1}$.  Let us define
\[
\begin{array}{l}
\phi:\Bbb{C}\rightarrow \Bbb{C}^2\\
\phi(z)=\phi_1(\psi(\phi_2^{-1}( z)))
\end{array}.
\]

A  straightforward calculation shows that $\phi(z)=(2z,z^2)$, thus the tangent space to the curve $\phi$ at $\phi(k)$ is $\Bbb{C}(1, k)+(2k,k^2)$. Therefore the tangent line to $Ver$ at $[1,2k,k^2]$ is $\overleftrightarrow{[1,2k,k^2], [1,2k+1,k+k^2]}$. A simple verification shows 

$$T_{\psi[x,y]}Ver=\{[x,y,z]\in \Bbb{P}^1_\Bbb{C} \vert z=ky-k^2x\}.$$
\end{proof}

\begin{lemma}\label{l:3gen}
Let $\Gamma\subset\PSL(2,\Bbb{C})$ be a non-elementary subgroup and   
$x,y,z\in \Lambda(\Gamma)$  be distinct points, then  the lines  $T_{\psi(x)}Ver,T_{\psi(y)}Ver,T_{\psi(z)}Ver$ are in general  position.
\end{lemma}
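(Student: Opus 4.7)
The plan is to combine Lemma \ref{l:ltanver} with the triple transitivity of $\PSL(2,\Bbb{C})$ on $\Bbb{P}^1_\Bbb{C}$ and a Vandermonde-type calculation.

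By the commutative diagram \eqref{e:aut}, for every $\gamma\in\PSL(2,\Bbb{C})$ the projective transformation $\iota(\gamma)\in\PSL(3,\Bbb{C})$ preserves $Ver$ and sends $T_{\psi(p)}Ver$ onto $T_{\psi(\gamma p)}Ver$ (projective transformations send tangent lines of smooth curves to tangent lines of their image). Since $\iota(\gamma)$ is an element of $\PSL(3,\Bbb{C})$, it also preserves the property that three lines are in general position. Using the classical triple transitivity of $\PSL(2,\Bbb{C})$ on $\Bbb{P}^1_\Bbb{C}$, I may therefore assume, without loss of generality, that $x=[1,k_1]$, $y=[1,k_2]$, $z=[1,k_3]$ all lie in the affine chart $W_2$, with $k_1,k_2,k_3$ pairwise distinct complex numbers. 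The non-elementary hypothesis is used only to guarantee that $\Lambda(\Gamma)$ contains three distinct points.

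By Lemma \ref{l:ltanver}, the tangent line $T_{\psi[1,k_i]}Ver$ is the vanishing locus of the linear form $k_i^2 X - k_i Y + Z$. Three lines in $\Bbb{P}^2_\Bbb{C}$ are in general position if and only if they are pairwise distinct and share no common point, which is equivalent to the non-vanishing of the determinant of their coefficient matrix. Here
\[
\det\begin{pmatrix} k_1^2 & -k_1 & 1\\ k_2^2 & -k_2 & 1\\ k_3^2 & -k_3 & 1\end{pmatrix}=(k_1-k_2)(k_2-k_3)(k_3-k_1),
\]
which is a Vandermonde-type identity and is non-zero because the $k_i$ are pairwise distinct. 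This finishes the argument.

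I do not anticipate any genuine obstacle. The only point requiring attention is the reduction to the affine chart $W_2$, which is handled cleanly by combining \eqref{e:aut} with triple transitivity; the rest is elementary linear algebra.
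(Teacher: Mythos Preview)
Your proof is correct and follows essentially the same route as the paper: reduce to the affine chart so that the three points are $[1,k_i]$ with the $k_i$ distinct, apply Lemma~\ref{l:ltanver}, and conclude by the Vandermonde determinant. Your reduction via the diagram~\eqref{e:aut} and triple transitivity is in fact more carefully stated than the paper's bare assumption that $[1,0],[0,1]\notin\Lambda(\Gamma)$, but the substance is identical.
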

\begin{proof}
Let us assume that  $[1,0],[0,1]\notin \Lambda(\Gamma)$. Then there are $k,r,s\in \Bbb{C}$ such that 

 \[
\begin{array}{l}
\psi(x)=[1,2k,k^2] \\
 \psi(y)=[1,2r,r^2] \\
\psi(z)=[1,2s,s^2].
\end{array}
\]
From Lemma \ref{l:ltanver} we know  
 \[
\begin{array}{l}
T_{\psi(x)} Ver=\{
[x,y,z]
\in \Bbb{P}^1_\Bbb{C} \vert z=ky-k^2x\} \\
T_{\psi(y)} Ver=\{
[x,y,z]
\in \Bbb{P}^1_\Bbb{C} \vert z=ry-r^2x\} \\
T_{\psi(z)} Ver =\{
[x,y,z]
\in \Bbb{P}^1_\Bbb{C} \vert z=sy-s^2x\}. 
\end{array}
\]
Since
\[
\left \vert
\begin{array}{lll}
k^2&-k&1\\
r^2&-r&1\\
s^2&-s&1\\
\end{array}\right\vert=(s-r)(k-s)(k-r)\neq 0
\]
we conclude the proof.
\end{proof}

\begin{lemma} \label{l:pseudo}
Let $(\gamma_n)\subset \PSL(2,\Bbb{C})$ be a sequence of distinct elements such that 
$\gamma_n\xymatrix{\ar[r]_{\rightarrow\infty}&}x$ uniformly on compact sets of $\Bbb{P}^1_\Bbb{C}\setminus\{y\}$. Then 
$\iota\gamma_n\xymatrix{\ar[r]_{\rightarrow\infty}&}\psi(x)$ uniformly on compact sets of $\Bbb{P}^2_\Bbb{C}\setminus T_{\psi(y)}Ver$.
\end{lemma}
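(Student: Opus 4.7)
The plan is to apply Proposition \ref{p:completes} to $(\iota\gamma_n)$, extract a subsequential limit $\tau_0 \in M(3,\Bbb{C}) \setminus \{0\}$, identify $\tau_0$ as the rank-one pseudo-projective map with image $\psi(x)$ and kernel $T_{\psi(y)}Ver$, and then conclude via Proposition \ref{p:completes}(2). Since the identification will be subsequence-independent, uniqueness of the subsequential limit forces the whole sequence $(\iota\gamma_n)$ to converge to $\tau_0$ in $QP(3,\Bbb{C})$.

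For the rank and image, I would use the commutative diagram \eqref{e:aut}, which gives $\iota\gamma_n \circ \psi = \psi \circ \gamma_n$, so the hypothesis yields $\iota\gamma_n(v) \to \psi(x)$ for every $v \in Ver \setminus \{\psi(y)\}$. Hence for any subsequential $QP(3,\Bbb{C})$-limit $\tau_0$ one has $[[\tau_0]](v) = \psi(x)$ at every $v \in Ver \setminus (\{\psi(y)\} \cup Ker([[\tau_0]]))$, which is an infinite subset of the smooth conic $Ver$. A rank analysis using B\'ezout rules out ranks two and three: a rank-three $\tau_0$ would make $[[\tau_0]]^{-1}(\psi(x))$ a single point, while a rank-two $\tau_0$ would give $[[\tau_0]]^{-1}(\psi(x))$ contained in a projective line, which meets the conic $Ver$ in at most two points. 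Hence $\tau_0$ has rank one with image $\psi(x)$ and kernel some projective line $L$.

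To pin down $L$ as $T_{\psi(y)}Ver$, I would work with lifts $\widetilde\gamma_n \in \SL(2,\Bbb{C})$, noting that $\mathrm{Sym}^2\widetilde\gamma_n \in \SL(3,\Bbb{C})$ is a lift of $\iota\gamma_n$, and use that the homogeneous degree-two polynomial map $\mathrm{Sym}^2\colon M(2,\Bbb{C}) \to M(3,\Bbb{C})$ sends nonzero matrices to nonzero matrices and therefore induces a continuous map $QP(2,\Bbb{C}) \to QP(3,\Bbb{C})$. After a further subsequence, the hypothesis forces $(\widetilde\gamma_n)$ to converge in $QP(2,\Bbb{C})$ to a rank-one $\widetilde\tau_0 = u v^t$ (up to scalar) with $[u] = x$ and $Ker(\widetilde\tau_0) = \Bbb{C} w_0$, $[w_0] = y$. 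Then $\tau_0$ is a scalar multiple of $\mathrm{Sym}^2\widetilde\tau_0$, whose kernel $\{v \cdot w_0 : v \in \Bbb{C}^2\} \subset \mathrm{Sym}^2\Bbb{C}^2$ projectivizes, by Lemma \ref{l:ltanver} and the coordinates of $\psi$, exactly to $T_{\psi(y)}Ver$. The main technical obstacle is establishing this $\mathrm{Sym}^2$ identification cleanly so that pseudo-projective convergence transfers from $QP(2,\Bbb{C})$ to $QP(3,\Bbb{C})$; once in place, the kernel computation is a two-line calculation.
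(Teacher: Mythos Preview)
Your proposal is correct and follows essentially the same route as the paper: pass to the pseudo-projective limit of $(\iota\gamma_n)$ by taking normalized lifts of $(\gamma_n)$, push them through the explicit polynomial formula for $\iota$, and read off the image $\psi(x)$ and kernel $T_{\psi(y)}Ver$ from the resulting rank-one $3\times 3$ matrix. The paper simply writes this out entrywise (your $\mathrm{Sym}^2(\widetilde\tau_0)$ is exactly their matrix $B$), whereas you phrase it as continuity of $\mathrm{Sym}^2\colon QP(2,\Bbb{C})\to QP(3,\Bbb{C})$; these are the same computation in different notation.

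One remark: your B\'ezout rank analysis in the second paragraph is sound but redundant. Once you know $\mathrm{Sym}^2$ is homogeneous quadratic and sends nonzero matrices to nonzero matrices (which you need anyway for your third paragraph), the rank, image, and kernel of $\tau_0$ all fall out of the single identity $\mathrm{Sym}^2(uv^t)=\psi(u)\,(v_1^2,\,v_1v_2,\,v_2^2)$, and the kernel line $\{v_1^2X+v_1v_2Y+v_2^2Z=0\}$ matches Lemma~\ref{l:ltanver} with $y=[-v_2:v_1]$. The paper does precisely this in coordinates and skips the B\'ezout step entirely.
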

\begin{proof}
Let us assume that $\gamma_n=\left [\left [a_{ij}^{(n)}\right ]\right ]$. Note that we can assume  $a_{ij}^{(n)}\xymatrix{\ar[r]_{n\rightarrow\infty}&}a_{ij}$ and $\sum_{i,j=1}^2\mid a_{ij} \mid\neq 0$. Then 
$\gamma_n\xymatrix{\ar[r]_{n\rightarrow\infty}&}\gamma=\left [\left [a_{ij}\right ]\right ]$ uniformly on compact sets of 
$\Bbb{P}^1_{\Bbb{C}}\setminus Ker(\gamma)$, thus  
$Ker(\gamma)=\{y\}$ and  $Im(\gamma)=\{x\}$. Therefore  there is a $k\in \Bbb{C}^*$ such that $x= [1,k]$, 
thus  $a_{11}=-ka_{12}$ and  $a_{21}=-ka_{22}$. In consequence 
\[
\iota\gamma_n
 \xymatrix{ \ar[r]_{n \rightarrow\infty}&}
B=
 \left [  \left [
\begin{array}{lll}
k^2a_{12}^2&-ka_{12}^2& a_{12}^2\\
2k^2a_{12}a_{22}&-2ka_{12}a_{22}&2a_{12}a_{22}\\
k^2a_{22}^{2}&-ka_{22}^2&a_{22}^2\\
\end{array}
\right ]\right ].
\]

A simple calculation shows 
that $Ker(B)$ is the line 
$\ell=\overleftrightarrow{ [e_1-k^2e_3],[e_2+ke_3]}$. Also it is not hard to check that 
\[\ell=\{[x,y,z]\vert k^2x-ky+z=0 \},\]
which concludes the proof.
\end{proof}

\begin{proposition}
Let  $\Gamma\subset\PSL(2,\Bbb{C})$ be a non-elementary group. Then  $\iota(\Gamma)$  does not have invariant subspaces in $\Bbb{P}^2_\Bbb{C}$.
\end{proposition}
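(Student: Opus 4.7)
The approach is by contradiction: I will suppose that $\iota(\Gamma)$ leaves invariant a proper projective subspace of $\Bbb{P}^2_\Bbb{C}$, which must be either a single point $p$ or a projective line $\ell$, and then derive a contradiction from the dynamics of loxodromic elements in $\Gamma$. The key dynamical input is that any loxodromic $\gamma\in\Gamma$ with attracting fixed point $x$ and repelling fixed point $y$ satisfies $\gamma^n\to x$ uniformly on compact subsets of $\Bbb{P}^1_\Bbb{C}\setminus\{y\}$, whence Lemma \ref{l:pseudo} yields $\iota\gamma^n\to \psi(x)$ uniformly on compact subsets of $\Bbb{P}^2_\Bbb{C}\setminus T_{\psi(y)}Ver$. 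Since $\Gamma$ is non-elementary, it contains infinitely many loxodromic elements whose fixed points form an infinite reservoir inside $\Lambda(\Gamma)$.

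For an invariant point $p$, I would observe that $Ver$ is a smooth conic, so only finitely many of its tangent lines pass through $p$. I can therefore pick a loxodromic $\gamma\in\Gamma$ whose repelling fixed point $y$ satisfies $p\notin T_{\psi(y)}Ver$. Since $p=\iota\gamma^n(p)$ for every $n$ and the right-hand side converges to $\psi(x)$, I conclude $p=\psi(x)$. Repeating the argument with a second loxodromic element whose attracting fixed point $x'\neq x$ (still avoiding the finitely many bad tangent lines through $p$) yields $p=\psi(x')$, contradicting the injectivity of $\psi$.

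For an invariant line $\ell$, I would select three loxodromic elements $\gamma_1,\gamma_2,\gamma_3\in\Gamma$ with pairwise distinct attracting fixed points $x_i$ and with repelling fixed points $y_i$ satisfying $T_{\psi(y_i)}Ver\neq\ell$. This is possible since distinct points of $Ver$ have distinct tangent lines (compare Lemma \ref{l:ltanver}), so each avoidance constraint excludes only finitely many candidates from the infinite set of loxodromic fixed points. Then $\ell\setminus T_{\psi(y_i)}Ver$ is nonempty, and for any $q$ in it the invariance $\iota\gamma_i^n(q)\in\ell$ together with $\iota\gamma_i^n(q)\to\psi(x_i)$ forces $\psi(x_i)\in\ell$. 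This places three distinct points of $Ver$ on $\ell$, contradicting the elementary fact that a line meets the Veronese conic in at most two points. The main delicacy is the bookkeeping which guarantees that the avoidance conditions can be met simultaneously, but this is routine given the infinitude of the loxodromic fixed points in $\Lambda(\Gamma)$.
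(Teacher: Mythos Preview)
Your proof is correct, but it differs from the paper's in how the invariant-line case is handled. The paper disposes of a putative invariant line $\ell$ directly and without any dynamics: by B\'ezout, $Ver\cap\ell$ consists of one or two points, and since both $Ver$ and $\ell$ are $\iota\Gamma$-invariant, so is $Ver\cap\ell$; pulling back through the commuting diagram gives a $\Gamma$-invariant subset of $\Bbb{P}^1_\Bbb{C}$ with at most two points, forcing $\Gamma$ to be elementary. Your argument instead uses Lemma~\ref{l:pseudo} to push three distinct attracting fixed points onto $\ell$ and only then invokes B\'ezout for the contradiction. Both routes are valid; the paper's is shorter and avoids the loxodromic bookkeeping, while yours has the virtue of using a single dynamical mechanism uniformly in both cases. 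For the invariant point the two arguments are closer in spirit: the paper also shows $p\in Ver$ (invoking Lemma~\ref{l:3gen} to produce a tangent line missing $p$), but then finishes by observing that $\psi^{-1}(p)$ is a global fixed point of $\Gamma$, rather than by producing two distinct preimages of $p$ as you do.
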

\begin{proof}
Let us assume that there is a complex line  $\ell$ invariant under  $\iota(\Gamma)$. By  Bézout's theorem  $Ver\cap \ell$ has either   one or two  points. From the following commutative diagram 
\begin{equation*}
\xymatrix{
\Bbb{P}_\Bbb{C}^1 \ar[r]^{ \tau}\ar[d]^\psi         & \Bbb{P}_\Bbb{C}^1 \ar[d]^\psi \\
Ver \ar[r]^{\iota\tau}  & Ver
}
\end{equation*}
where  $\tau\in\Gamma$, we deduce that  $\Gamma$ leaves 
$\psi^{-1}(Ver\cap\ell)$ invariant. Therefore  $\Gamma$ is an elementary group, which is a contradiction, thus $\iota\Gamma$ does not have invariant lines in $\Bbb{P}^2_\Bbb{C}$. 
Finally, if there is a point 
 $p\in\Bbb{P}_\Bbb{C}^2$ fixed by  $\iota\Gamma$, then by  Lemmas \ref{l:3gen},  \ref{l:eq} and \ref{l:pseudo}, 
 there is a sequence of distinct elements
  $(\gamma_m)_{m\in\Bbb{N}}\subset\Gamma$  and a pseudo-projective transformation 
 $\gamma\in QP(3,\Bbb{C})$ such that $\iota\gamma_m\xymatrix{\ar[r]_{m\rightarrow\infty}&}\gamma $ and  $Ker(\gamma)$ is a complex line not containing $p$.
Since  $p$ is invariant and outside $Ker(\gamma)$ we conclude $\{p\}=Im(\gamma)$. 
On the other hand, by Lemma \ref{l:pseudo} we deduce  $p\in Ver$. Therefore   
$\Gamma$ is elementary, which is a contradiction.
\end{proof}

The following theorem follows easily from the previous discussion.

\begin{theorem} \label{l:eq}
Let   $\Gamma$ be a discrete subgroup  of   $\PSL(2,\Bbb{C})$. Then 
$$\Bbb{P}_\Bbb{C}^2\setminus Eq(\iota(\Gamma))=\bigcup_{z\in\Lambda(\Gamma)}T_{\psi(z)}(\psi(\Bbb{P}_\Bbb{C}^)). $$
 Moreover $\Omega_{\Kul}(\iota\Gamma)=Eq(\iota(\Gamma))$ is Kobayashi hyperbolic,  pseudo-convex, and 
is the largest open set on which  $\Gamma$ acts properly  discontinuously.
\end{theorem}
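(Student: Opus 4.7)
The plan is to prove the set-theoretic identity first and then deduce the remaining assertions from Proposition \ref{p:pkg} combined with the preceding proposition, which states that $\iota\Gamma$ has no invariant projective subspace when $\Gamma$ is non-elementary. The elementary case (at most two points in $\Lambda(\Gamma)$) is handled by direct inspection, so I shall implicitly assume $\Gamma$ is non-elementary.

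For the inclusion $\bigcup_{z\in\Lambda(\Gamma)}T_{\psi(z)}Ver \subseteq \Bbb{P}^2_\Bbb{C}\setminus Eq(\iota\Gamma)$, fix $z\in\Lambda(\Gamma)$. Standard dynamics for discrete subgroups of $\PSL(2,\Bbb{C})$ combined with Proposition \ref{p:completes} in dimension one yield a sequence of distinct elements $\gamma_n\in\Gamma$ converging, as pseudo-projective maps, to a rank-one limit with kernel $\{z\}$ and image $\{w\}$ for some $w\in\Lambda(\Gamma)$; concretely, $\gamma_n\to w$ uniformly on compact subsets of $\Bbb{P}^1_\Bbb{C}\setminus\{z\}$. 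Lemma \ref{l:pseudo} then gives $\iota\gamma_n\to\psi(w)$ uniformly on compact subsets of $\Bbb{P}^2_\Bbb{C}\setminus T_{\psi(z)}Ver$, and Proposition \ref{p:completes}.(2) identifies this complement with the equicontinuity set of $\{\iota\gamma_n\}$, so every point of $T_{\psi(z)}Ver$ fails to lie in $Eq(\iota\Gamma)$. For the reverse inclusion, let $p\notin Eq(\iota\Gamma)$; some sequence of distinct $\iota\gamma_n$ admits no equicontinuous subsequence at $p$. Extract a subsequence with $\gamma_n\to\gamma_\infty$ in $QP(2,\Bbb{C})$; discreteness of $\Gamma$ and injectivity of $\iota$ force $\gamma_\infty$ to have rank one, so $\gamma_n\to x$ uniformly on $\Bbb{P}^1_\Bbb{C}\setminus\{y\}$ for some $x,y\in\Bbb{P}^1_\Bbb{C}$, both of which lie in $\Lambda(\Gamma)$ (a further extraction applied to $(\gamma_n^{-1})$ realises $y$ as an orbital accumulation point). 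By Lemma \ref{l:pseudo} the pseudo-projective limit of $\iota\gamma_n$ has kernel $T_{\psi(y)}Ver$, and Proposition \ref{p:completes}.(2) forces $p$ to lie in that kernel.

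With the identity in hand, the preceding proposition gives that $\iota\Gamma$ has no invariant projective subspace, so Proposition \ref{p:pkg}.(3) simultaneously yields $\Omega_{\Kul}(\iota\Gamma)=Eq(\iota\Gamma)$, its complete Kobayashi hyperbolicity, and the maximality of this open set for proper discontinuity (reading the final clause of the theorem with $\iota\Gamma$ rather than $\Gamma$, as is surely intended). The main obstacle I foresee is pseudo-convexity, which does not follow formally from complete Kobayashi hyperbolicity. The identity exhibits the complement of $\Omega_{\Kul}(\iota\Gamma)$ as the union of tangent lines of $Ver$ parametrised by the compact set $\Lambda(\Gamma)\subset\Bbb{P}^1_\Bbb{C}$; equivalently, $p\in\Omega_{\Kul}(\iota\Gamma)$ iff the dual line of $p$ in $(\Bbb{P}^2_\Bbb{C})^*$ avoids the closed subset of the dual conic $Ver^*\subset(\Bbb{P}^2_\Bbb{C})^*$ corresponding to $\Lambda(\Gamma)$. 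I expect pseudo-convexity to follow by constructing a plurisubharmonic exhaustion from a suitable distance on the dual side to this closed subset of $Ver^*$, but this is the step that requires genuine work rather than mere bookkeeping with the tools already assembled.
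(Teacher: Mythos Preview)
The paper offers no explicit proof of this theorem, stating only that it ``follows easily from the previous discussion''; your argument is precisely that intended discussion made explicit---the set identity via Lemma~\ref{l:pseudo} and Proposition~\ref{p:completes}, then the remaining clauses via the preceding no-invariant-subspace proposition together with Proposition~\ref{p:pkg}(\ref{i:pk4}). Your flag on pseudo-convexity is well taken: it is indeed the one assertion not literally contained in Proposition~\ref{p:pkg}, and the paper simply asserts it. The standard resolution (which your dual-picture sketch is heading toward) is that a domain in $\Bbb{P}^2_\Bbb{C}$ whose complement is a union of complex lines is \emph{lineally convex}---through every boundary point there passes a complex line disjoint from the domain---and lineally convex domains are pseudo-convex; no further work specific to the Veronese curve is needed.
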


\section{Complex Hyperbolic Groups Leaving $Ver$ Invariant} \label{s:chvh}

 In this section we   characterize  the subgroups of $\PU(2,1)$  that leave  invariant a  projective translation   of  the Veronese curve $Ver$. We need some preliminary lemmas.

\begin{lemma}  \label{l:semialg} Let  $B$ be a complex ball. Then   
$$Aut(BV)=\{g\in\PSL(3,\Bbb{C})\vert g\in \iota\PSL(2,\Bbb{C}),gB=B\}$$ is a   semi-algebraic group.
\end{lemma}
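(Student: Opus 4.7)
The plan is to show that each of the two defining conditions on $g$ cuts out a (real) semi-algebraic subset of $\PSL(3,\Bbb{C})$, so that their intersection is semi-algebraic; that the intersection is a subgroup is automatic, since both $\iota\PSL(2,\Bbb{C})$ and the stabilizer $\{g:gB=B\}$ are subgroups. For this, recall that $\PSL(3,\Bbb{C})$ is naturally a real algebraic variety (for instance, via the finite quotient $\SL(3,\Bbb{C})/\mu_3$, with $\SL(3,\Bbb{C})$ a real algebraic group of real dimension $16$), so it makes sense to speak of semi-algebraic subsets.

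First, I would argue that $\iota\PSL(2,\Bbb{C})$ is an algebraic subgroup of $\PSL(3,\Bbb{C})$. By the proposition proved earlier, $\iota\PSL(2,\Bbb{C})$ coincides with the projective stabilizer of the Veronese curve $Ver$. Since $Ver$ is a smooth projective conic cut out by a single quadratic equation $Q(x,y,z)=y^2-4xz=0$, a lift $\tilde g\in \SL(3,\Bbb{C})$ satisfies $g(Ver)=Ver$ if and only if $Q\circ \tilde g=\mu\, Q$ for some $\mu\in \Bbb{C}^*$. Equating coefficients of the two quadratic forms gives finitely many polynomial equations in the entries of $\tilde g$, whose real and imaginary parts are polynomial in real and imaginary parts of these entries. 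Hence $\iota\PSL(2,\Bbb{C})$ is real algebraic, in particular semi-algebraic.

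Next, I would examine the condition $gB=B$. Since $B$ is a complex ball, it is the projectivization of the set of negative vectors for some Hermitian form $H_B$ of signature $(2,1)$. A lift $\tilde g\in \SL(3,\Bbb{C})$ preserves $B$ if and only if $\tilde g^*H_B\tilde g=\lambda H_B$ for some $\lambda\in \Bbb{R}^*$. Writing $\tilde g=A+iB$ with $A,B$ real matrices, each entry of the equation $\tilde g^*H_B\tilde g-\lambda H_B=0$ becomes a polynomial equation over $\Bbb{R}$ in the entries of $A$, $B$, and $\lambda$. After projecting back to $\PSL(3,\Bbb{C})$ (and quantifying out the auxiliary $\lambda$ by the Tarski–Seidenberg theorem, which preserves semi-algebraicity), we get a semi-algebraic subset of $\PSL(3,\Bbb{C})$.

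Finally, the intersection of these two semi-algebraic sets is semi-algebraic, and the resulting set is a subgroup because it is the intersection of two subgroups. The main obstacle, as I see it, is purely technical: carrying out cleanly the passage between $\SL(3,\Bbb{C})$ and $\PSL(3,\Bbb{C})$ and handling the scalar $\lambda$ and the projective ambiguity, so that the set is manifestly defined by real polynomial equalities and inequalities in local coordinates on $\PSL(3,\Bbb{C})$. Conceptually, however, both defining conditions are visibly polynomial over $\Bbb{R}$, so the statement follows from stability of the semi-algebraic category under finite intersection and projection.
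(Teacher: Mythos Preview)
Your proposal is correct and follows the same overall strategy as the paper: exhibit $Aut(BV)$ as the intersection of two semi-algebraic subgroups of $\PSL(3,\Bbb{C})$, namely $\iota\PSL(2,\Bbb{C})$ and the stabilizer of $B$. The difference lies only in how each factor is shown to be semi-algebraic. The paper simply invokes the general fact (from \cite{semi}) that a simple Lie group with trivial center is semi-algebraic, applied to $\iota\PSL(2,\Bbb{C})$ and to the conjugate of $\PU(2,1)$ stabilizing $B$; it then explicitly notes that the graphs of multiplication and inversion restricted to $Aut(BV)$ are semi-algebraic, which is the content of ``semi-algebraic group''. You instead give concrete polynomial descriptions: $\iota\PSL(2,\Bbb{C})$ as the projective stabilizer of the conic $Q=y^2-4xz$, and the ball stabilizer via $\tilde g^*H_B\tilde g=\lambda H_B$ together with Tarski--Seidenberg to eliminate $\lambda$. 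Your argument is more self-contained and avoids the external citation, at the cost of the bookkeeping you flag (the $\SL/\PSL$ passage and the auxiliary scalar). One small point you leave implicit, which the paper spells out, is that the restricted group operations are themselves semi-algebraic; this is automatic once $Aut(BV)$ is a semi-algebraic subset of the algebraic group $\PSL(3,\Bbb{C})$, since the graph of the restricted multiplication is the intersection of the full multiplication graph with $Aut(BV)^3$.
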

\begin{proof}
Since  $\iota(\PSL(2,\Bbb{C}))$ and   
$\PU(2,1)$ are simple Lie groups with  trivial centers, we deduce that  they are  semi-algebraic groups (see \cite{semi}). Thus the sets 
\[ 
\begin{array}{l}
\{(g,h,gh): g,h\in Aut(BV)\}\\
\{(g,g^{-1}): g\in Aut(BV)\}
\end{array}
\]
are semi-algebraic sets. Therefore  $Aut(BV)$ is a 
semi-algebraic group.
\end{proof}

\begin{lemma} \label{c:liedim}
Let $\Gamma\subset\PSL(2,\Bbb{C})$ be a discrete non-elementary group such that  $\iota\Gamma$ leaves invariant a complex ball  $B$. Then:

\begin{enumerate}
\item \label{l:1} The group  $Aut(BV)$ is a  Lie group of  positive dimension.
\item \label{l:2} We have $\psi\Lambda(\Gamma)\subset Ver\cap\partial B$.

\item \label{l:3} Set $C=\partial B\cap Ver$. Then the set $\psi^{-1}(C)$ is an  algebraic curve of  degree at most four.
  
\item \label{l:4} The group    $\iota^{-1}Aut(BV)$ can be  conjugated  to a subgroup of   $Mob(\hat{\Bbb{R}})$, where 
$Mob(\hat{\Bbb{R}})=\{\gamma\in\PSL(2,\Bbb{C}):\gamma(\Bbb{R}\cup\{\infty\})=\Bbb{R}\cup\{\infty\}\}$.

\item \label{l:5} The set  $\psi^{-1}(C)$ is a circle in the  Riemann sphere.

\item \label{l:6} The set $C$ is an $\Bbb{R}$-circle, {\it i.e.} 
$C=\gamma(\partial\Bbb{H}^2_{\Bbb{C}}\cap\Bbb{P}^2_{\Bbb{R}})$, where $\gamma\in\PSL(3,\Bbb{C})$ is some element satisfying   $\gamma(\Bbb{H}^2_{\Bbb{C}})=B$.

\item \label{l:7} The set  $Ver\cap (\Bbb{P}^2_{\Bbb{C}}\setminus\overline{B})$ is non-empty.

\item \label{l:8} The set  $Ver\cap  B$ is non-empty.

\end{enumerate}
\end{lemma}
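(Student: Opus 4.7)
I would prove the eight claims in order, relying on three tools: the semi-algebraic structure of $Aut(BV)$ from Lemma~\ref{l:semialg}, the type-preservation and dynamics of $\iota$ already established, and a classification of the connected Lie subgroups of $\PSL(2,\Bbb{C})$ that can be normalized by a non-elementary discrete group.

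For (\ref{l:1}), $Aut(BV)$ is a semi-algebraic group containing the infinite discrete subgroup $\iota\Gamma$; since semi-algebraic sets have finitely many connected components, the identity component cannot be a single point, so it must be positive-dimensional. For (\ref{l:2}), loxodromic fixed points are dense in $\Lambda(\Gamma)$; for a loxodromic $\gamma\in\Gamma$ with fixed points $p_\pm$, the type-preserving image $\iota\gamma$ is loxodromic in the stabilizer of $B$, with attracting and repelling fixed points on the invariant curve $Ver$ given by $\psi(p_\pm)$; these must lie on $\partial B$, and continuity of $\psi$ then yields $\psi(\Lambda(\Gamma))\subset Ver\cap\partial B$. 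For (\ref{l:3}), substituting $\psi(z,w)=(z^2,2zw,w^2)$ into the defining Hermitian form of $\partial B$ produces a real polynomial of bidegree $(2,2)$ in $(z,w,\bar{z},\bar{w})$, cutting out $\psi^{-1}(C)$ as a real algebraic curve of degree at most four.

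For (\ref{l:4}), let $H^0$ denote the identity component of $\iota^{-1}Aut(BV)\subset\PSL(2,\Bbb{C})$; then $\Gamma$ normalizes $H^0$. Connected Lie subgroups of $\PSL(2,\Bbb{C})$ of real dimension $1$, $2$, or $4$ all preserve a finite subset of $\Bbb{P}^1_\Bbb{C}$ (the fixed set of their abelian or Borel structure), which would force $\Gamma$ elementary; the full $6$-dimensional case is incompatible with (\ref{l:2}) because $\iota\PSL(2,\Bbb{C})$ acts transitively on $Ver$, so $Ver\cap\partial B$ would be empty or all of $Ver$. The remaining options for $H^0$ are the $3$-dimensional real forms $\PSU(2)$ and $\PSL(2,\Bbb{R})$; the compact case is excluded since discrete subgroups of $\PSU(2)$ are finite. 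Hence $H^0$ is conjugate to $\PSL(2,\Bbb{R})$, and since its normalizer in $\PSL(2,\Bbb{C})$ equals $Mob(\hat{\Bbb{R}})$, claim (\ref{l:4}) follows. Claim (\ref{l:5}) is then immediate: $\psi^{-1}(C)$ is a real-analytic hypersurface in $\Bbb{P}^1_\Bbb{C}$ invariant under a conjugate of $\PSL(2,\Bbb{R})$, and the only $1$-dimensional $\PSL(2,\Bbb{R})$-invariant subset of $\Bbb{P}^1_\Bbb{C}$ is $\hat{\Bbb{R}}$, so $\psi^{-1}(C)$ is a circle. For (\ref{l:6}), applying $\psi$ and using $\psi(\hat{\Bbb{R}})=Ver\cap\Bbb{P}^2_\Bbb{R}$ together with the conjugating element exhibits $C$ as an $\Bbb{R}$-circle on $\partial B$.

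For (\ref{l:7}), if $Ver\subset\overline{B}$, pick any complex line $L\subset \Bbb{P}^2_\Bbb{C}\setminus\overline{B}$ (such lines form a non-empty open subset of the dual projective plane, since $\Bbb{P}^2_\Bbb{C}\setminus\overline{B}$ is open and non-empty). Then $Ver$ is a compact holomorphic curve in the affine chart $\Bbb{P}^2_\Bbb{C}\setminus L\cong \Bbb{C}^2$, which by the maximum modulus principle must be a point, contradicting $Ver\cong\Bbb{P}^1_\Bbb{C}$. For (\ref{l:8}), the two connected components of $Ver\setminus C$ correspond under the conjugation in (\ref{l:5}) to $\psi(\Bbb{H})$ and $\psi(\Bbb{H}^-)$, each of which is $H^0$-invariant; so each component lies entirely in $B$ or in $\Bbb{P}^2_\Bbb{C}\setminus\overline{B}$, and combining (\ref{l:7}) with the complementary compactness argument (using that $H^0$ contains elliptic 1-parameter subgroups whose fixed points in $\PU(B)$ lie inside $B$) shows at least one component must lie in $B$, giving $Ver\cap B\neq\emptyset$. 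The main obstacle is (\ref{l:4}): classifying the positive-dimensional Lie subgroups of $\PSL(2,\Bbb{C})$ admitting a non-elementary normalizer, and identifying the normalizer of $\PSL(2,\Bbb{R})$ in $\PSL(2,\Bbb{C})$ as $Mob(\hat{\Bbb{R}})$.
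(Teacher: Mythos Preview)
Parts (\ref{l:1})--(\ref{l:6}) are correct and broadly parallel the paper, with some simplifications: your (\ref{l:5}) via the $\PSL(2,\Bbb{R})$-orbit structure on $\Bbb{P}^1_\Bbb{C}$ is cleaner than the paper's detour through the Greenberg limit set and Montel's lemma, and your (\ref{l:4}) is a self-contained version of what the paper imports from \cite{CS1}. In (\ref{l:7}) the conclusion is right but your parenthetical justification is not: an open non-empty subset of $\Bbb{P}^2_\Bbb{C}$ need not contain a complex line. The paper exhibits such a line concretely as $x^\perp$ for any $x\in B$, which is disjoint from $\overline B$ by the signature-$(2,1)$ condition; after this fix, either B\'ezout (as in the paper) or your maximum-modulus argument finishes the claim.

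The real gap is (\ref{l:8}). You correctly note that each disk $\psi(\Bbb{H}^\pm)\subset Ver\setminus C$ lies wholly in $B$ or wholly outside $\overline B$, and that by (\ref{l:7}) at least one lies outside. But your sketch for why not \emph{both} lie outside is not a proof: an elliptic one-parameter subgroup of $\iota H^0$ does have a fixed point in $B$, but that point is one of \emph{three} eigenpoints of $\iota\gamma$ in $\Bbb{P}^2_\Bbb{C}$, and nothing you have said prevents the one inside $B$ from being the third eigenpoint off $Ver$ rather than $\psi(p_+)$ or $\psi(p_-)$. The paper handles this step by a genuinely different, analytic argument: writing $\partial B$ via a Hermitian matrix and pulling back by $\psi$ gives the real quartic $F$ on $\Bbb{R}^2$ from part (\ref{l:3}); one computes that $\Delta F$ vanishes along an ellipse, and a sub/superharmonic maximum-principle argument then rules out $F\ge 0$ identically. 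You need either to supply this computation or to find an honest replacement for the missing implication.
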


\begin{proof}
 Let us start by showing  (\ref{l:1}). Since 
 $Aut(BV)$ is semi-algebraic, we deduce that  it is a Lie group with  a finite number of connected  components   (see \cite{semi}). On the other hand, since   $Aut(BV)$ contains a discrete subgroup, we conclude $Aut(BV)$ has positive dimension.\\

Now let us prove  part (\ref{l:2}). Let $x\in \Lambda(\Gamma)$. Then there is a sequence  $(\gamma_n)\subset \Gamma$ of distinct elements  such that  $\gamma_n\xymatrix{\ar[r]_{m\rightarrow\infty}&}x$ uniformly on compact sets of $\widehat{\Bbb{C}}\setminus \{x\}$. From  Lemma \ref{l:pseudo}  we know that  
$\iota\gamma_n\xymatrix{\ar[r]_{m\rightarrow\infty}&}\psi(x)$ uniformly on compact sets of 
$\Bbb{P}_\Bbb{C}^2\setminus T_{\psi(x)}Ver$, thus  $\psi x\in \partial B$ and 
$T_{\psi(x)}Ver$  is tangent  to  $\partial(B)$ at $x$. This concludes the proof.\\

Now let us prove part  (\ref{l:3}).   Since $\Gamma$ preserves the ball $B$,  there is  a Hermitian matrix $A=(a_{ij})$ with signature $(2,1)$ such that 
$B=\{[x]\in\Bbb{P}^2_{\Bbb{C}}:\overline{x}^t Ax<0\}$. 
 Without loss of generality, we may assume that  $[0,0,1]\notin C=\partial(B)\cap Ver$. Thus for each   $x\in C$, there is a unique  $z\in \Bbb{C}$ such that  
  $x=[1,2z,z^2]=\psi [1,z]$ and 
$(1,2\bar z,\bar{z}^2)^tA(1, 2z,z^2)=0$.
A straightforward calculation shows that  $(1,2\bar z,\bar{z}^2)^tA(1, 2z,z^2)=0$ is equivalent  to
\begin{equation}\label{e:cuadrica}
a_{11}+4Re(a_{12}z)+2Re(a_{13}z^2)+a_{33}\vert z\vert^4+4\vert z\vert^2 Re(a_{23}z)+4\vert z \vert^2a_{22}=0.
\end{equation}
Taking $z=x+iy$ and  $a_{ij}=b_{ij}+ic_{ij}$, Equation  (\ref{e:cuadrica})   can be written as
\[
\begin{array}{l}
a_{11}+4(b_{12}x-c_{12}y)+2(b_{13}(x^2-y^2)-2c_{13}xy)+a_{33}(x^2+y^2)^2+\\
+4(x^2+y^2)( b_{23}x-c_{23}y)+4(x^2+y^2)a_{22}=0,
\end{array}
\]
which proves the assertion.\\

Let us prove part  (\ref{l:4}). Since $\iota^{-1}Aut(BV)$ is a Lie group with positive dimension 
containing a  non-elementary  discrete subgroup, we deduce that (see \cite{CS1}) 
$\iota^{-1}Aut(BV)$ can be conjugated either to  $\PSL(2,\Bbb{C})$ or  a subgroup of   $Mob(\hat{\Bbb{R}})$. On the other hand,  we know that    $\PSL(2,\Bbb{C})$ acts transitively on the  Riemann sphere, but 
$\iota^{-1}Aut(BV)$ leaves an  algebraic curve invariant, plus a point, therefore  $\iota^{-1}Aut(BV)$ is conjugate  to a subgroup of  
 $Mob(\hat{\Bbb{R}})$, which concludes the proof.\\

Let us prove part  (\ref{l:5}). We know that $C$ is 
  $Aut(BV)$-invariant and by part (\ref{l:3}) of the present lemma  $\psi^{-1}C$  is an 
algebraic curve. Thus by  Montel's Lemma we conclude that  
$\Lambda_{Gr}\iota^{-1}Aut(BV)\subset \psi^{-1}C$, where  $\Lambda_{Gr}\iota^{-1}Aut(BV)$ is the  Greenberg limit set of $\iota^{-1}Aut(BV)$,  see \cite{CS1}. Finally by part  (\ref{l:3}), we know that   
$ \iota^{-1}Aut(BV)$ is conjugate to a subgroup of  $Mob(\hat{\Bbb{R}})$, therefore    
$ \Lambda_{Gr}\iota^{-1}Aut(BV)$ is a circle in the  Riemann sphere and $\Lambda_{Gr}\iota^{-1}Aut(BV) = \psi^{-1}C$.\\

In order to prove part  (\ref{l:6}), observe that  after a projective  change of coordinates we can assume that $\psi^{-1}C=\hat{\Bbb{R}}$. Thus   
$C=\psi  \hat{\Bbb{R}}=\{[z^2,2zw,w^2]:z,w\in \Bbb{R}, \vert a\vert +\vert b\vert \neq 0\}$. The following claim concludes the proof.\\

Claim.  The sets   $C$ and      $\partial \Bbb{H}^1_{\Bbb{R}}=\{[x,y,z]\in\Bbb{P}^2_{\Bbb{R}}:x^2+y^2=z^2\}$ are  projectively equivalent. 
 Let   $\gamma\in PSL(3,\Bbb{R})$, be the projective transformation  induced by:
 \[
\widetilde  \gamma=
 \begin{pmatrix}
 1 & 0 & -1\\
 0 & 1 &0\\
 1 & 0 &1
 \end{pmatrix}.
 \]
Given  $[p]=[x^2,2xy, y^2]\in C$, we get $\gamma(p)=(x^2 - y^2, 2 xy, x^2 + y^2)$ and     
$$
(x^2 - y^2)^2+ (2 xy)^2=( x^2 + y^2)^2.
$$
Thus $\gamma C\subset \partial \Bbb{H}^1_{\Bbb{R}}$. Since  $C$ is a  compact,  connected and contains more than two points we conclude that $\gamma$ is a projective equivalence between $C$ and $\partial \Bbb{H}^1_{\Bbb{R}}$. \\

Now we prove part (\ref{l:7}). Let $x\in B$. Then  $x^{\bot}$ is a complex line in  $\Bbb{P}_\Bbb{C}^2\setminus \bar{B}$; by  Bézout's theorem we know  
$Ver\cap x^\bot$ is non-empty, thus   
$Ver\cap(\Bbb{P}^2_\Bbb{C}\setminus\bar{B})\neq\emptyset$.\\

Finally, let us prove part (\ref{l:8}). After conjugating by an element in 
$\iota\PSL(3,\Bbb{C})$ we can assume that  $[0,0,1]\notin\partial B$. 
Let $A=(a_{ij})$ be the Hermitian matrix introduced in part (\ref{l:3}) of the present lemma. Clearly $a_{33}\neq 0$. Now let $F:\Bbb{R}^2\rightarrow \Bbb{R}$ be given by 

\begin{scriptsize}
\[
F(x,y)=a_{11}+4(b_{12}x-c_{12}y)+2(b_{13}(x^2-y^2)-2c_{13}xy)+a_{33}(x^2+y^2)^2
+4(x^2+y^2) ( b_{23}x-c_{23}y+a_{22}).
\]
\end{scriptsize}
Thus by part  (\ref{l:5}) of this lemma we know $\psi^{-1}C=F^{-1}0$ is a circle. Moreover
\[
\begin{array}{l}
\iota F^{-1}\Bbb{R}^+=Ver\cap\Bbb{P}_\Bbb{C}^2\setminus \bar{B}.\\
\iota F^{-1}\Bbb{R}^-=Ver\cap B.\\
\iota F^{-1}0=Ver\cap\partial B.\\
\end{array}
\]
If $Ver\cap B=\emptyset$, then $F(x,y)\geq 0$. A straightforward calculation shows
\[
\bigtriangleup  F(x,y)=16(a_{33}(x^2+y^2)+a_{22}+2b_{23}x-2c_{23}y).
\]
Thus $E=\{(x,y)\in \Bbb{R}^2:\bigtriangleup  F(x,y)=0\}$  is an ellipse.\\

Claim: We have   $ \psi^{-1}C\cap Int(E)=\emptyset$. On the contrary let us assume that there is an $x\in  C\cap Int(E)\neq\emptyset$.  Then there is an open neighbourhood $U$ of  
$x$ contained in  $Int(E)$. Thus $\bigtriangleup  F(x,y) $ is negative on $U$,  {\it i.e.} $F$ is super-harmonic on $U$. However, $F$ attains its minimum 
in $U$, which is a contradiction.\\ 

From the previous claim we conclude  $C$ is contained in the closure  of  $Ext(E)$, therefore   
$\bigtriangleup  F(x,y)\leq 0$  in  $Int(\psi^{-1}C)$. As a consequence, $F$ is subharmonic in $Int(\psi^{-1}C)$. Let  $c$ be the centre of $\psi^{-1}C$ and   $r$ its  radius.   Let    $(r_n)$  be a strictly  increasing  sequence of positive numbers  
 such that $r_n\xymatrix{\ar[r]_{n\rightarrow\infty}&}r$. Let  
 $x_n\in\overline{B_{r_n}(c)}$  be such that  
$$F(x_n)=max\{F(x):x\in\overline{B_{r_n}(c)}\}.$$

Since $F$ is subharmonic in 
$B_{r_n}(c)$ we conclude $x_n\in \partial B_{r_n}(c)$ and  $(F(x_{n}))$ is a strictly increasing sequence of positive numbers. 
Since  $Int(\psi^{-1}C)\cup \psi^{-1}C$ is a compact set, we can assume
 $x_n\xymatrix{\ar[r]_{n\rightarrow\infty}&}x$, and clearly   $x\in \psi^{-1}C$. On the other hand, since $F$ is continuous we conclude 
  $F(x_n)\rightarrow F(x)=0$, which is a contradiction.
\end{proof}

\begin{corollary} \label{l:conpo}
There is a $\gamma_0\in \PSL(3,\Bbb{R})$ such that 
 \begin{enumerate} 
\item \label{l:con1} $\gamma_0\iota \PSL(2,\Bbb{R})\gamma^{-1}_0=\PO^+(2,1)$, where $\PO^+(2,1)$ is the principal connected component of $\PO(2,1)$,
\item \label{l:con2} $\gamma_0 Ver \cap \Bbb{H}^2_\Bbb{C} $ is non-empty and  $\PO(2,1)^+$-invariant. 
\end{enumerate}
\end{corollary}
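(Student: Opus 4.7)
The plan is to obtain $\gamma_0\in\PSL(3,\mathbb{R})$ by composing the explicit real projective transformation produced inside the Claim of Lemma \ref{c:liedim}(\ref{l:6}) with a further real diagonalization. The former is designed to carry the $\mathbb{R}$-circle $\psi(\hat{\mathbb{R}})\subset Ver$ onto the standard real conic $\{x^2+y^2=z^2\}\subset\mathbb{P}^2_\mathbb{R}$; the latter matches that conic to the null cone of the specific form $H$ defining $\PO^+(2,1)$. Both ingredients are manifestly real, so the resulting $\gamma_0$ lies in $\PSL(3,\mathbb{R})$.

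For property (\ref{l:con1}), I would first note that $\iota\PSL(2,\mathbb{R})$ preserves $\psi(\hat{\mathbb{R}})$ by the $\iota$-equivariance of $\psi$ (since $\PSL(2,\mathbb{R})$ preserves $\hat{\mathbb{R}}\subset\mathbb{P}^1_\mathbb{C}$). Consequently $G:=\gamma_0\,\iota\PSL(2,\mathbb{R})\,\gamma_0^{-1}$ is a subgroup of $\PSL(3,\mathbb{R})$ which stabilizes the nondegenerate real conic cut out (up to scalar) by $H$, hence $G\subseteq\PO(2,1)$. The continuity of conjugation together with the connectedness of $\PSL(2,\mathbb{R})$ forces $G\subseteq\PO^+(2,1)$. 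Finally, $G$ and $\PO^+(2,1)$ are both connected simple Lie subgroups of $\PSL(3,\mathbb{R})$ of real dimension three, so the inclusion must be an equality.

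For property (\ref{l:con2}), observe that $\mathbb{H}^2_\mathbb{C}$ is preserved by $\PU(2,1)\supset\PO^+(2,1)$, so $B:=\gamma_0^{-1}(\mathbb{H}^2_\mathbb{C})$ is a complex ball preserved by $\iota\PSL(2,\mathbb{R})$. Choose any discrete non-elementary Fuchsian subgroup $\Gamma\subset\PSL(2,\mathbb{R})$ (for instance a cocompact surface group); then $\iota\Gamma$ meets the hypotheses of Lemma \ref{c:liedim} with this $B$, and part (\ref{l:8}) of that lemma supplies $Ver\cap B\neq\emptyset$. Transporting by $\gamma_0$ yields $\gamma_0 Ver\cap\mathbb{H}^2_\mathbb{C}\neq\emptyset$. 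This intersection is $\PO^+(2,1)$-invariant because $\gamma_0 Ver$ is preserved by $G=\PO^+(2,1)$ (by property (\ref{l:con1})) and $\mathbb{H}^2_\mathbb{C}$ is preserved by the larger group $\PU(2,1)$.

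The main obstacle is essentially a positivity check: one must verify that $\gamma_0^t H\gamma_0$ is a \emph{positive} scalar multiple of the Hermitian form preserved by $\iota\PSL(2,\mathbb{R})$, so that the conjugate group lands in $\PO^+(2,1)$ rather than in a different real form of $\PO(3,\mathbb{C})$. Once this scaling is arranged, the corollary is a matter of stitching together the geometric content of parts (\ref{l:6}) and (\ref{l:8}) of the preceding lemma, with no new calculation required.
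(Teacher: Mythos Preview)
Your proposal is correct and follows essentially the same route as the paper's proof: both arguments use that $\iota\PSL(2,\mathbb{R})$ preserves the real conic $\psi(\hat{\mathbb{R}})$, transport this conic to the standard one via a real projective map $\gamma_0$, and then conclude $\gamma_0\,\iota\PSL(2,\mathbb{R})\,\gamma_0^{-1}=\PO^+(2,1)$ by a dimension and connectedness count. For part (\ref{l:con2}) the paper simply declares the claim ``trivial,'' whereas you spell out the invariance and invoke Lemma~\ref{c:liedim}(\ref{l:8}) for nonemptiness; this is exactly what the paper's terse remark is gesturing at, so there is no genuine divergence.

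One small comment: the ``positivity check'' you flag as the main obstacle is not really an issue. Once $\gamma_0\in\PSL(3,\mathbb{R})$ carries $\psi(\hat{\mathbb{R}})$ onto the null conic of $H$ in $\mathbb{P}^2_\mathbb{R}$, any lift $\widetilde\gamma_0\in\SL(3,\mathbb{R})$ of an element of $G$ satisfies $\widetilde\gamma_0^{\,t}H\widetilde\gamma_0=\lambda H$ with $\lambda^3=1$, hence $\lambda=1$; so $G\subset\PO(2,1)$ automatically, and there is no risk of landing in a different real form. The only genuine choice is the further real congruence taking $\mathrm{diag}(1,1,-1)$ to the paper's anti-diagonal $H$, which you already note.
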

\begin{proof}
Let us prove  (\ref{l:con1}). By   Lemma \ref{c:liedim} we have  that 
$\iota\PSL(2,\Bbb{R})$ is a Lie group of   dimension   three and   preserves  the quadric in  $\Bbb{P}^2_{\Bbb{R}}$ given by  
\[
\{[w^2,2wz,z^2]: z,w\in \Bbb{R}\}.
\]
 Thus there is a $\gamma_0$ in $\PSL(3,\Bbb{R})$ such that 
  $\gamma_0\iota M\ddot{o}b(\hat{\Bbb{R}})\gamma^{-1}_0$ preserves 
$$\{[x,w,z]\in\Bbb{P}^2_{\Bbb{R}}:\vert y\vert^2+\vert w\vert^2<\vert x\vert^2\}.$$ 
Hence  $\gamma_0\iota\PSL(2,\hat{\Bbb{R}})\gamma^{-1}_0=\PO^+(2,1)$.
 Part  (\ref{l:con2}) is now trivial. 
\end{proof}

\begin{theorem}\label{t:liedim}
Let  $\Gamma\subset\PSL(2,\Bbb{C})$ be a discrete non-elementary  group. The group  $\iota\Gamma$ is complex hyperbolic if and only if $ \Gamma$ is Fuchsian, {\ i.e.} a subgroup of  $\PSL(2,\Bbb{R})$. 
\end{theorem}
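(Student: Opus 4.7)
The plan is to piece together both directions as short corollaries of what has already been established, with all the heavy lifting done in Lemma~\ref{c:liedim}(\ref{l:4}) and Corollary~\ref{l:conpo}.

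For the easy direction ($\Leftarrow$), I would start from $\Gamma\subset\PSL(2,\Bbb{R})$ and invoke Corollary~\ref{l:conpo}(\ref{l:con1}) to obtain $\gamma_0\in\PSL(3,\Bbb{R})$ with $\gamma_0\iota\PSL(2,\Bbb{R})\gamma_0^{-1}=\PO^+(2,1)$. Since every real matrix $g$ satisfying $g^tHg=H$ automatically satisfies $g^*Hg=H$, the inclusion $\PO^+(2,1)\subset\PU(2,1)$ holds, and hence $\gamma_0(\iota\Gamma)\gamma_0^{-1}\subset\PU(2,1)$ preserves the standard complex ball. This is the definition of $\iota\Gamma$ being complex hyperbolic.

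For the substantive direction ($\Rightarrow$), I would fix a complex ball $B$ preserved by $\iota\Gamma$ and observe that by definition $\iota\Gamma\subset Aut(BV)$ in the notation of Lemma~\ref{l:semialg}; the injectivity of $\iota$ (Lemma~\ref{l:mor}) then gives $\Gamma\subset\iota^{-1}Aut(BV)$. The hypotheses of Lemma~\ref{c:liedim} hold because $\Gamma$ is assumed discrete and non-elementary, so part~(\ref{l:4}) of that lemma conjugates $\iota^{-1}Aut(BV)$ inside $\PSL(2,\Bbb{C})$ onto a subgroup of $Mob(\hat{\Bbb{R}})$. Since $\Gamma$ is carried along by this conjugation, it is conjugate to a subgroup of $Mob(\hat{\Bbb{R}})$, which is exactly the real Möbius group acting on $\hat{\Bbb{R}}$, hence Fuchsian.

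Because all the work has been packaged into Lemma~\ref{c:liedim}, the proof of this theorem is essentially a matter of assembly, and the true obstacle lies upstream: part~(\ref{l:4}) rules out the possibility $\iota^{-1}Aut(BV)=\PSL(2,\Bbb{C})$ using parts~(\ref{l:1}) and~(\ref{l:3}), which show that $Aut(BV)$ is a positive-dimensional Lie group leaving a nontrivial algebraic curve invariant and therefore cannot act transitively on $\hat{\Bbb{C}}$. The only mild point of attention at this final step is matching the image of the conjugation to the theorem's wording ``subgroup of $\PSL(2,\Bbb{R})$'', which amounts to identifying $Mob(\hat{\Bbb{R}})$ with the real Möbius group and, if necessary, absorbing an orientation-reversing element into the conjugation; this is routine and does not require any new input beyond what the preceding lemmas provide.
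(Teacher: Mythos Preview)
Your reverse direction is fine, and your invocation of Lemma~\ref{c:liedim}(\ref{l:4}) to place $\Gamma$ (after conjugation) inside $Mob(\hat{\Bbb{R}})$ is correct. The gap is in your last paragraph, where you call the passage from $Mob(\hat{\Bbb{R}})$ to $\PSL(2,\Bbb{R})$ ``routine'' and suggest ``absorbing an orientation-reversing element into the conjugation''. This cannot be done: $Mob(\hat{\Bbb{R}})\cong\PGL(2,\Bbb{R})$ has $\PSL(2,\Bbb{R})$ as an index-two subgroup, and a non-elementary subgroup of $Mob(\hat{\Bbb{R}})$ that contains an element swapping the two half-planes is \emph{not} conjugate in $\PSL(2,\Bbb{C})$ to a subgroup of $\PSL(2,\Bbb{R})$. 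Indeed, any such conjugate would preserve a round disc whose boundary circle must, by non-elementarity, coincide with the original invariant circle, and then the half-plane-swapping element gives an immediate contradiction. So Lemma~\ref{c:liedim}(\ref{l:4}) alone does not finish the proof.

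The paper's proof is devoted precisely to ruling out this orientation-reversing case, and it uses parts~(\ref{l:7}) and~(\ref{l:8}) of Lemma~\ref{c:liedim}, which you did not mention. After conjugating so that $\Gamma\subset Mob(\hat{\Bbb{R}})$ with invariant circle $C=\hat{\Bbb{R}}$, one supposes there is $\tau\in\Gamma$ exchanging the components $B^+,B^-$ of $\widehat{\Bbb{C}}\setminus C$. Pick $x\in Ver\cap B$, which exists by part~(\ref{l:8}). The identity component $Aut^+(BV)$ acts transitively on each of $\psi(B^\pm)$ and preserves $B$; since both $x$ and $\iota\tau(x)$ lie in $Ver\cap B$, their $Aut^+(BV)$-orbits $\psi(B^+)$ and $\psi(B^-)$ lie in $B$. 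Together with $\psi(C)\subset\partial B$ this forces $Ver\subset\overline{B}$, contradicting part~(\ref{l:7}). That contradiction is the missing step; without it your argument only shows $\Gamma$ is conjugate into $Mob(\hat{\Bbb{R}})$, not that it is Fuchsian.
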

\begin{proof}
Assume that  $\iota\Gamma$ preserves a complex ball $B$. Then by Lemma \ref{c:liedim} we deduce that  $\Gamma$ 
preserves a circle $C$ in  the Riemann sphere. Let  $B^+$ and  $B^-$ be the connected  components of $\Bbb{P}_\Bbb{C}^1\setminus C$ and assume that there is a $\tau\in\Gamma$ such that  $\tau(B)^+=B^-$. Let  $x\in  Ver\cap B$  and denote by  $Aut^+(BV)$   the principal connected  component  of  $Aut(BV)$ which contains the identity. Then by Lemma \ref{c:liedim} we  deduce  
\[
\begin{array}{l}
Aut^+(BV)x= \psi\iota^{-1}Aut(BV)\psi^{-1}x=
\psi (B^{+}) \;\;\hbox {and}
\\
Aut^+(BV) \iota\tau(x)= \psi\iota^{-1}Aut(BV)\tau\psi^{-1}x=\psi(B^{-}).\\
\end{array}
\]
Therefore  
$$Ver=Aut^+(BV)x\cup  Aut^+(BV)\iota\tau(x)\cup  C\subset\overline{\Bbb{H}}^2,$$
which is a contradiction. Clearly, this concludes the proof.
\end{proof}

We arrive at the following theorem:

\begin{theorem}
Let $\Gamma\subset\PSL(2,\Bbb{C})$. Then the following claims are equivalent:
\begin{enumerate}
\item The group $\Gamma$ is Fuchsian.
\item  The group $\iota\Gamma$ is complex hyperbolic. 
\item The group $\iota\Gamma$ is  $\Bbb{R}$-Fuchsian
\end{enumerate}

\end{theorem}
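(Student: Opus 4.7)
The plan is to bootstrap on the machinery already set up in the paper: the heavy lifting was done in Theorem \ref{t:liedim} and Corollary \ref{l:conpo}, so what remains is a short chain of implications.

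First I would dispose of (3)$\Rightarrow$(2) by the trivial inclusion $\PO^+(2,1)\subset \PU(2,1)$: an $\Bbb{R}$-Fuchsian subgroup of $\PSL(3,\Bbb{C})$ is, by definition, conjugate to a subgroup of $\PO^+(2,1)$, and therefore preserves the complex ball $\Bbb{H}^2_{\Bbb{C}}$. The implication (2)$\Rightarrow$(1) is precisely the content of Theorem \ref{t:liedim}, just proved. So the only nontrivial step is (1)$\Rightarrow$(3).

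For (1)$\Rightarrow$(3), suppose $\Gamma$ is Fuchsian, so there exists $g\in \PSL(2,\Bbb{C})$ with $g\Gamma g^{-1}\subset \PSL(2,\Bbb{R})$. Because $\iota$ is a group morphism (Lemma \ref{l:mor}), conjugation transports upstairs:
\[
(\iota g)\,\iota\Gamma\,(\iota g)^{-1}=\iota(g\Gamma g^{-1})\subset \iota\PSL(2,\Bbb{R}).
\]
Corollary \ref{l:conpo} furnishes a $\gamma_0\in \PSL(3,\Bbb{R})$ with $\gamma_0\,\iota\PSL(2,\Bbb{R})\,\gamma_0^{-1}=\PO^+(2,1)$. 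Composing the two conjugations gives
\[
(\gamma_0\,\iota g)\,\iota\Gamma\,(\gamma_0\,\iota g)^{-1}\subset \PO^+(2,1),
\]
which is exactly the statement that $\iota\Gamma$ is $\Bbb{R}$-Fuchsian. This also gives a direct argument for (1)$\Rightarrow$(2), bypassing Theorem \ref{t:liedim}, via the trivial inclusion $\PO^+(2,1)\subset \PU(2,1)$.

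I do not anticipate a genuine obstacle here, because all the real work already sits in Theorem \ref{t:liedim} and Corollary \ref{l:conpo}. The one point worth being explicit about is that these earlier results assume $\Gamma$ is a non-elementary discrete subgroup, so the present theorem is implicitly stated under the same hypothesis (and indeed the equivalence is either trivial or vacuous in the elementary case, since elementary Kleinian subgroups of $\PSL(2,\Bbb{C})$ are conjugate into $\PSL(2,\Bbb{R})$ anyway up to obvious exceptions). Apart from this consistency check, the proof is essentially a one-line diagram-chase through Lemma \ref{l:mor} and Corollary \ref{l:conpo}.
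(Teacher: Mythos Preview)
Your proposal is correct and matches the paper's approach: the paper gives no explicit proof, simply writing ``We arrive at the following theorem'' immediately after Theorem \ref{t:liedim}, so the intended argument is precisely the chain of implications you spell out via Theorem \ref{t:liedim}, Corollary \ref{l:conpo}, and the inclusion $\PO^+(2,1)\subset \PU(2,1)$. Your remark about the implicit non-elementary discrete hypothesis is also well taken.
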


\section{Subgroups of  $\PSL(3,\Bbb{R})$ that Leave Invariant a Veronese Curve}
\label{s:riv}

In this section we  characterize those subgroups of $\PSL(3,\Bbb{R})$ which leave invariant a projective copy of $Ver$.

\begin{theorem}
Let $\Gamma\subset \PSL(2,\Bbb{C})$ be a discrete subgroup. Then the following facts are equivalent
\begin{enumerate}
\item The group $\Gamma $ is conjugate to a subgroup of $Mob(\hat{\Bbb{R}})$.
\item The group $\iota\Gamma$ is conjugate to a subgroup of  $\PSL(3,\Bbb{R})$.
\end{enumerate}
\end{theorem}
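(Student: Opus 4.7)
I would approach $(1)\Rightarrow (2)$ by direct computation. After conjugation, assume $\Gamma\subset Mob(\hat{\Bbb{R}})$. Every element of $Mob(\hat{\Bbb{R}})$ admits a lift to $\GL(2,\Bbb{C})$ with real entries, and the explicit formula defining $\iota$ then shows that $\iota\gamma$ has real entries as well, so $\iota(\Gamma)\subset \PSL(3,\Bbb{R})$. Alternatively, Corollary \ref{l:conpo} provides a conjugator sending $\iota(\PSL(2,\Bbb{R}))$ onto $\PO^+(2,1)$, and the same conjugator carries all of $\iota(Mob(\hat{\Bbb{R}}))$ into $\PSL(3,\Bbb{R})$.

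The substantive direction is $(2)\Rightarrow (1)$; the plan is to transfer a real structure from $\Bbb{P}^2_{\Bbb{C}}$ down to the Riemann sphere via $Ver$. After conjugation, assume $\iota(\Gamma)\subset \PSL(3,\Bbb{R})$, so $\iota(\Gamma)$ commutes with the antiholomorphic involution $\tau$ of $\Bbb{P}^2_{\Bbb{C}}$ whose fixed locus is $\Bbb{P}^2_{\Bbb{R}}$. Because $\iota(\Gamma)$ preserves $Ver$, it also preserves the smooth conic $\tau(Ver)$, since $\iota\gamma(\tau(Ver))=\tau(\iota\gamma(Ver))=\tau(Ver)$ for every $\gamma\in\Gamma$. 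I then split into two cases. If $\tau(Ver)\neq Ver$, B\'ezout's theorem forces $Ver\cap\tau(Ver)$ to be a non-empty $\iota(\Gamma)$-invariant set of at most four points, whose $\psi$-preimage is a finite $\Gamma$-invariant subset of $\Bbb{P}^1_{\Bbb{C}}$, contradicting non-elementarity. Hence $\tau(Ver)=Ver$, and $\sigma:=\psi^{-1}\tau\psi$ is a well-defined antiholomorphic involution of $\Bbb{P}^1_{\Bbb{C}}$.

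Combining $\psi\circ\gamma=(\iota\gamma)\circ\psi$ with $\tau\circ\iota\gamma=\iota\gamma\circ\tau$ yields $\sigma\gamma=\gamma\sigma$ for every $\gamma\in\Gamma$. Any antiholomorphic involution of $\Bbb{P}^1_{\Bbb{C}}$ is conjugate either to $z\mapsto \bar z$, whose centralizer in $\PSL(2,\Bbb{C})$ is $Mob(\hat{\Bbb{R}})$, or to the fixed-point-free antipodal map $z\mapsto -1/\bar z$, whose centralizer is $\SO(3)$. The first option delivers $(1)$ immediately; the second would place $\Gamma$ inside a compact group, making it finite and hence elementary, which is excluded under the standing non-elementarity assumption.

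The main obstacle is the step $\tau(Ver)=Ver$: without it the descent to $\Bbb{P}^1_{\Bbb{C}}$ collapses, and ruling out $\tau(Ver)\neq Ver$ via B\'ezout is precisely where non-elementarity is essential. Finite subgroups of $\SO(3)$ that are not conjugate into $Mob(\hat{\Bbb{R}})$ (such as the icosahedral one) illustrate that the equivalence is cleanest under this hypothesis, and the write-up should either make the assumption explicit or handle elementary $\Gamma$ separately by enumeration.
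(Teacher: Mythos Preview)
Your argument is correct, and for the implication $(2)\Rightarrow(1)$ it follows a genuinely different route from the paper. The paper works with the group $Aut(PV)=\iota\PSL(2,\Bbb{C})\cap\{g:g\Bbb{P}=\Bbb{P}\}$, shows it is semi-algebraic and hence a Lie group, uses the inclusion of the infinite discrete group $\Gamma$ to force positive dimension, and then invokes the classification of Lie subgroups of $\PSL(2,\Bbb{C})$ (as in \cite{CS1}) to conclude that $\iota^{-1}Aut(PV)$ is conjugate into $Mob(\hat{\Bbb{R}})$; the transitive case $\PSL(2,\Bbb{C})$ is excluded because it would force $Ver\subset\Bbb{P}$. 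Your approach instead transports the real structure directly: the antiholomorphic involution $\tau$ fixing the real form sends $Ver$ to another $\iota\Gamma$-invariant conic, B\'ezout plus non-elementarity force $\tau(Ver)=Ver$, and then $\sigma=\psi^{-1}\tau\psi$ is an antiholomorphic involution of $\Bbb{P}^1_{\Bbb{C}}$ centralizing $\Gamma$, so the two-case classification of such involutions finishes the job. Your method is more elementary and self-contained, avoiding both the semi-algebraic machinery and the Lie subgroup classification; the paper's method has the virtue of paralleling exactly the $Aut(BV)$ argument used in the complex hyperbolic section.

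Two small remarks. First, when you write ``after conjugation, assume $\iota(\Gamma)\subset\PSL(3,\Bbb{R})$'', the conjugator $g\in\PSL(3,\Bbb{C})$ moves $Ver$ as well, so strictly speaking you are working with the conic $g(Ver)$ and the parametrization $g\circ\psi$; this is harmless but worth saying once. Second, you are right that non-elementarity is essential in both directions of the dichotomy (your B\'ezout step and your exclusion of the $\SO(3)$ case); the paper's proof has the same tacit hypothesis, since the positive-dimension claim and the use of $\Lambda(\Gamma)$ both require $\Gamma$ to be infinite.
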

\begin{proof}
Let $\Gamma$ be a subgroup of  $Mob(\hat{\Bbb{R}})$ and  $\gamma\in \Gamma$. Then
\[
\gamma=\left [\left [
\begin{array}{ll}
i & 0\\
0 & -i\\
\end{array}
\right ]\right ]
\left [\left [
\begin{array}{ll}
a & b\\
c & d\\
\end{array}
\right ]\right ]
\]
where  $a,b,c,d\in \Bbb{R}$ and  $ad-bc=1$. A straightforward calculation shows
that 
\[
\iota\gamma=
\left [\left [
\begin{array}{lll}
-1 & 0 &0\\
0 & 1&\\
0 &0 &-1
\end{array}
\right ]\right ]
\left [ 
\left [
\begin{array}{lll}
a^2 & ab & b^2\\
2ac & ad+bc &2bd\\
c^2 & cd& d^2\\
\end{array}
\right ]\right ],
\]
therefore $\iota\Gamma\subset\PSL(3,\Bbb{R})$.\\

Let us assume that there is a real projective space  $\Bbb{P}$ which is  
$\iota\Gamma$-invariant. Thus, as in Lemma \ref{l:semialg}, we conclude that
$$Aut(PV)=\iota\PSL(2,\Bbb{C})\cap\{g\in \PSL(3,\Bbb{C})\vert g\Bbb{P}=\Bbb{P}\}$$
is a semi-algebraic group. Since  
$\Gamma\subset\iota^{-1}Aut(PV)$, we conclude that $\iota^{-1}Aut(PV)$ is a Lie group  with   positive dimension. From the classification of Lie subgroups of   $\PSL(2,\Bbb{C})$ (see \cite{CS1}), we deduce  that $\iota^{-1}Aut(PV)$ is either conjugate to $Mob(\hat{\Bbb{C}})$ or a subgroup of   $Mob(\hat{\Bbb{R}})$. In order to conclude the proof, observe that
the  group $\iota^{-1}Aut(PV)$ can not be  conjugated to  $Mob(\hat{\Bbb{C}})$. 
In fact,  assume on the  contrary that  $\iota^{-1}Aut(PV)=Mob(\hat{ \Bbb{C}})$. Since    
$Mob(\hat{ \Bbb{C}})$ acts transitively on  $\hat{\Bbb{C}}$ we deduce that   $Aut(PV)$ acts transitively  on  $Ver$. Finally, since 
$\psi(\Lambda(\Gamma))\subset Ver\cap\Bbb{P}$, we deduce   
$Ver\subset\Bbb{P}$, which is a contradiction.
\end{proof}

\section{Examples of Kleinian Groups with Infinite Lines in General Position} \label{s:rep}

Let us introduce the following projection, see \cite{goldman}. For each  
 $z\in\Bbb{C}^3$  let   $\eta$ be the function satisfying  $\eta(z)^2=-<z,z>$  and consider the projection 
$\Pi:\Bbb{H}^2_{\Bbb{C}}\rightarrow\Bbb{H}^2_{\Bbb{R}}$ given by
\[\Pi([z_1,z_2,z_3])=[\overline{\eta(z_1,z_2,z_3)}(z_1,z_2,z_3)+\eta(z_1,z_2,z_3)(\overline{z_1},\overline{z_2},\overline{z_3})].\]

\begin{lemma}
The projection  $\Pi$ is $\PO(2,1)$-equivariant.
\end{lemma}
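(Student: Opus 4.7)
The plan is to unwind the definition of $\Pi$ and exploit two features of elements of $\PO(2,1)$: they admit real matrix lifts, and those lifts preserve the Hermitian form $\langle\cdot,\cdot\rangle$. These two facts together should make the formula for $\Pi$ behave covariantly under the action.

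First, I would fix $g\in\PO(2,1)$ and choose a lift $\widetilde{g}\in\O(2,1)$, which has real entries by definition of $\O(2,1)$. Then for $z=(z_1,z_2,z_3)\in\Bbb{C}^3\setminus\{0\}$ with $\langle z,z\rangle<0$, the fact that $\widetilde{g}$ is real gives
\[
\overline{\widetilde{g}z}=\widetilde{g}\,\overline{z},
\]
where $\overline{z}=(\overline{z_1},\overline{z_2},\overline{z_3})$. Moreover, since $\widetilde{g}$ preserves $\langle\cdot,\cdot\rangle$, we have $\langle\widetilde{g}z,\widetilde{g}z\rangle=\langle z,z\rangle$, so $\eta(\widetilde{g}z)^2=\eta(z)^2$, and hence $\eta(\widetilde{g}z)=\pm\eta(z)$.

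Next I would substitute into the defining formula. Using a lift with $\eta(\widetilde gz)=\eta(z)$ (the sign is irrelevant projectively, since swapping it multiplies the output vector by $-1$), I compute
\[
\Pi(g[z])=\Pi([\widetilde{g}z])
=\bigl[\overline{\eta(\widetilde{g}z)}\,\widetilde{g}z+\eta(\widetilde{g}z)\,\overline{\widetilde{g}z}\bigr]
=\bigl[\overline{\eta(z)}\,\widetilde{g}z+\eta(z)\,\widetilde{g}\,\overline{z}\bigr]
=\bigl[\widetilde{g}\bigl(\overline{\eta(z)}z+\eta(z)\overline{z}\bigr)\bigr]
=g\,\Pi([z]),
\]
which is the desired equivariance. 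The only subtle point to check is that the vector $\overline{\eta(z)}z+\eta(z)\overline{z}$ is real (so that $\Pi([z])$ really does land in $\Bbb{H}^2_{\Bbb{R}}\subset\Bbb{P}^2_{\Bbb{R}}$), but this is immediate because the expression equals its own conjugate.

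I do not expect any real obstacle here; the lemma is essentially a bookkeeping statement reflecting that $\Pi$ was built from $\langle\cdot,\cdot\rangle$ and complex conjugation, both of which are preserved by real orthogonal transformations. The only matter that needs a word of comment is independence of the choice of sign in $\eta$ and of the lift $\widetilde g$ (two sign ambiguities, each of which disappears upon passing to projective classes).
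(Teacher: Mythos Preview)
Your proof is correct and follows essentially the same computation as the paper: apply the defining formula of $\Pi$ to $\widetilde g z$, use that $\widetilde g$ preserves $\langle\cdot,\cdot\rangle$ to identify $\eta(\widetilde g z)$ with $\eta(z)$, use that $\widetilde g$ has real entries to pull it past the conjugation, and factor. Your version is in fact slightly more careful than the paper's, since you explicitly address the sign ambiguity in $\eta$ and the reality of the output vector, whereas the paper leaves these implicit.
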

 \begin{proof}
 Let  $A\in O(2,1)$ and  $[z]\in \Bbb{H}_{\Bbb{C}}^2$. Then  
\[
\begin{array}{ll}
\Pi [Az] &=[\overline{ \eta(Az)}Az+\eta(Az)\overline{Az}]\\
         &=[\overline{\sqrt{-<A z, Az>}}Az+\sqrt {-<Az,Az>}A\bar{z}]\\
         &= [\overline{\sqrt{-<z,z>}}Az+\sqrt{-<z,z>}A\bar{z}]\\
         &= [A][\overline{\eta(z)}z+\eta(z)\overline{z}]\\
         &=[A]\Pi[z].
\end{array}
\]
 \end{proof}
 
 For simplicity in the notation,
in the rest of this article we will write $Ver$ instead of $\gamma_0(Ver)$,  $\psi$ instead of $\gamma_0\circ \psi$, and 
$\gamma_0\iota(\cdot)\gamma_0^{-1}$ instead of $\iota(\cdot)$, where $\gamma_0$ is the element given in Corollary  \ref{l:conpo}.

\begin{lemma} \label{l:prv} 
The map $\Pi:Ver \cap \Bbb{H}^2_{\Bbb{C}}\rightarrow \Bbb{H}^2_{\Bbb{R}}$ is a homeomorphism.
\end{lemma}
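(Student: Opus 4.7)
The plan is to exploit the $\PO^+(2,1)$-equivariance of $\Pi$ (from the preceding lemma) together with the fact that $\PO^+(2,1)$ acts transitively on both the source and the target. First I would establish that $\PO^+(2,1)$ acts transitively on $Ver \cap \Bbb{H}^2_{\Bbb{C}}$. By Corollary~\ref{l:conpo} this set is non-empty and $\PO^+(2,1)$-invariant, and the action of $\PO^+(2,1) = \gamma_0 \iota \PSL(2,\Bbb{R}) \gamma_0^{-1}$ on $Ver$ is conjugate, via $\psi$ and the commutative diagram~(\ref{e:aut}), to the standard action of $\PSL(2,\Bbb{R})$ on $\widehat{\Bbb{C}}$, whose orbits are $\widehat{\Bbb{R}}$ together with the two open half-planes, each transitive. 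The image $C = \gamma_0 \psi(\widehat{\Bbb{R}})$ lies in $\partial \Bbb{H}^2_{\Bbb{C}}$ by Lemma~\ref{c:liedim}(\ref{l:6}), and parts (\ref{l:7})--(\ref{l:8}) of the same lemma together with connectedness force exactly one of the two hemispheres of $Ver \setminus C$ to lie in $\Bbb{H}^2_{\Bbb{C}}$; that hemisphere is then a single $\PO^+(2,1)$-orbit.

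Next, since $\PO^+(2,1)$ acts transitively on $\Bbb{H}^2_{\Bbb{R}}$, the equivariance of $\Pi$ makes its image a non-empty $\PO^+(2,1)$-invariant subset of $\Bbb{H}^2_{\Bbb{R}}$, hence equal to all of $\Bbb{H}^2_{\Bbb{R}}$; this yields surjectivity. For injectivity I would fix $p_0 \in Ver \cap \Bbb{H}^2_{\Bbb{C}}$ and set $q_0 = \Pi(p_0)$, letting $H_1, H_2$ denote the $\PO^+(2,1)$-stabilizers of $p_0, q_0$ respectively. Equivariance gives $H_1 \subset H_2$. Now $H_2$ is a maximal compact $\SO(2) \subset \PO^+(2,1)$ (stabilizer of an interior point of the real ball), and $H_1$, via the conjugation to $\PSL(2,\Bbb{R})$ acting on the upper half-plane, corresponds to the stabilizer of an interior point there, again a maximal compact $\SO(2)$. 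Two compact connected subgroups of the same positive dimension with one contained in the other must coincide, so $H_1 = H_2$.

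With $H_1 = H_2$ injectivity is immediate: if $p = gp_0$ and $p' = hp_0$ satisfy $\Pi(p) = \Pi(p')$, then $gq_0 = hq_0$, so $h^{-1}g \in H_2 = H_1$, hence $p = gp_0 = hp_0 = p'$. The orbit maps then identify both $Ver \cap \Bbb{H}^2_{\Bbb{C}}$ and $\Bbb{H}^2_{\Bbb{R}}$ with the homogeneous space $\PO^+(2,1)/H_1$, under which $\Pi$ becomes the identity and in particular a homeomorphism. The main obstacle I expect is the first step, specifically confirming that $Ver \cap \Bbb{H}^2_{\Bbb{C}}$ is a single hemisphere of $Ver \setminus C$ rather than the union of both; this rests on the $\PO^+(2,1)$-invariance of $\Bbb{H}^2_{\Bbb{C}}$ together with the fact that the two hemispheres form distinct connected $\PO^+(2,1)$-orbits in $Ver$.
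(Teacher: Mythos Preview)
Your proposal is correct and follows essentially the same strategy as the paper: exploit the $\PO^+(2,1)$-equivariance of $\Pi$, use transitivity on the target for surjectivity, and compare elliptic stabilizers (both isomorphic to $\SO(2)$) for injectivity. The paper argues injectivity directly---given $x,y$ with $\Pi(x)=\Pi(y)$ it shows the $\PSL(2,\Bbb{R})$-stabilizers of $\psi^{-1}x$ and $\psi^{-1}y$ both sit inside the one-dimensional elliptic stabilizer of $\Pi(x)$, hence coincide---whereas you package the same comparison through the homogeneous-space identification $\PO^+(2,1)/H_1\cong\PO^+(2,1)/H_2$; this buys you the homeomorphism statement for free, a point the paper's proof leaves implicit after establishing only bijectivity. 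Your explicit verification that $Ver\cap\Bbb{H}^2_{\Bbb{C}}$ is a single hemisphere (hence a single $\PO^+(2,1)$-orbit) is also something the paper uses in its surjectivity chain $\Pi(\iota\PSL(2,\Bbb{R})\psi(x))=\Pi(Ver\cap\Bbb{H}^2_{\Bbb{C}})$ without spelling out; your justification via Lemma~\ref{c:liedim}(\ref{l:7})--(\ref{l:8}) and the $\PO^+(2,1)$-invariance of $\Bbb{H}^2_{\Bbb{C}}$ is exactly right.
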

\begin{proof} Let us prove that the map is onto.
Let  $x\in\Bbb{H}^+\cup\Bbb{H}^-$ be such that  $\psi(x)\in Ver\cap\Bbb{H}^2_{\Bbb{C}}$. Then 
\[
\begin{array}{ll}
\Bbb{H}^2_{\Bbb{R}}&=\PSO^+(2,1)\Pi(\psi x)\\
&=\Pi(\PSO^+(2,1)\psi x)\\
   &=\Pi(\iota\PSL(2,\Bbb{R}))(\psi(x))\\
&=\Pi(Ver\cap\Bbb{H}^2_{\Bbb{C}})
\end{array}.
\] 
Finally, let us prove that our map is injective. On the contrary, let us assume that there are  $x,y\in Ver\cap\Bbb{H}^2_\Bbb{C}$ such that $\Pi(x)=\Pi(y)$. Now define  
\[
\begin{array}{ll}
H_x=Isot(\PSL(2,\Bbb{R}),\psi^{-1}x),\\
H_y=Isot(\PSL(2,\Bbb{R}),\psi^{-1}y).
\end{array}
\]
Clearly  $H_y$ and $H_x$ are groups where each element is elliptic.  On the other hand, observe that 
\[
\begin{array}{l}
\iota H_x\Pi(x)=\Pi\iota H_x(x)=\Pi(x) \;\; \hbox{and}\\
\iota H_y\Pi(y)=\Pi\iota H_y(y)=\Pi(y).
\end{array}
\]
Therefore  
$$\iota H_x\cup\iota H_y\subset Isot(\PO^+(2,1),\Pi x).$$ 
Since   $\Pi(x)\in \Bbb{H}^2_{\Bbb{R}}$, we deduce that 
 $Isot (\PO^+(2,1),\Pi x)$ is a  Lie group where each element is   elliptic. Therefore 
$H=\iota^{-1}Isot (\PO^+(2,1),\Pi x)\gamma_0$ is a Lie  subgroup 
of $\PSL(2,\Bbb{R})$ where each element is elliptic and    $H_y\cup  H_x\subset H$. From the classification of  Lie subgroups of   $\PSL(2,\Bbb{C})$, we deduce  that $H$ is 
conjugate to a subgroup of  $Rot_\infty$.  Hence  $H_y=H_x$ and so $x=y$.\end{proof}
 
\begin{lemma} \label{t:rf}
Let  $\Gamma\subset \PSL(2,\Bbb{C})$ be a discrete group. Then  $\Gamma$ is conjugate to  a subgroup $\Sigma$ of  $\PSL(2,\Bbb{R})$ such that    
$ \Bbb{H}/\Sigma$ is a compact Riemann surface if and only if    $\iota\Gamma$  is conjugate to a discrete compact surface group of $\PO^+(2,1)$.  
\end{lemma}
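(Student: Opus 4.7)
The plan is to prove both implications using the machinery already built up, with Theorem \ref{t:liedim} doing the heavy lifting in the nontrivial direction.

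For the forward implication, suppose $\Gamma$ is conjugate to $\Sigma \subset \PSL(2,\Bbb{R})$ with $\Bbb{H}/\Sigma$ a compact Riemann surface, so $\Sigma \cong \pi_1(\Sigma_g)$ with $g \geq 2$. By Lemma \ref{l:mor}, $\iota$ is an injective group morphism, so $\iota\Sigma \cong \Sigma \cong \pi_1(\Sigma_g)$ is again a compact surface group. By the type-preserving proposition following Lemma \ref{l:mor}, $\iota\Sigma$ is discrete in $\PSL(3,\Bbb{C})$. By Corollary \ref{l:conpo}(\ref{l:con1}), conjugation by $\gamma_0$ carries $\iota\PSL(2,\Bbb{R})$ onto $\PO^+(2,1)$, so $\gamma_0\iota\Sigma\gamma_0^{-1}$ is a discrete compact surface subgroup of $\PO^+(2,1)$, and $\iota\Gamma$ is conjugate to it.

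For the backward implication, suppose $\iota\Gamma$ is conjugate to a discrete compact surface subgroup $H \subset \PO^+(2,1)$. Since $\PO^+(2,1) \subset \PU(2,1)$ preserves the unit ball $\Bbb{H}^2_\Bbb{C}$, the group $\iota\Gamma$ preserves a complex ball, i.e.\ is complex hyperbolic. Since $\Gamma \cong \iota\Gamma \cong H \cong \pi_1(\Sigma_g)$ contains a rank-two free subgroup, $\Gamma$ is non-elementary. Theorem \ref{t:liedim} then gives that $\Gamma$ is Fuchsian, i.e.\ conjugate to some $\Sigma \subset \PSL(2,\Bbb{R})$; moreover $\Sigma$ is discrete (since $\iota\Gamma$ is discrete and $\iota$ is an embedding of Lie groups) and isomorphic to $\pi_1(\Sigma_g)$.

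It remains to establish cocompactness of the $\Sigma$-action on $\Bbb{H}$. The cleanest route is geometric: diagram (\ref{e:aut}) shows that $\psi$ intertwines the $\Sigma$-action on $\Bbb{H} \subset \Bbb{P}^1_\Bbb{C}$ with the $\iota\Sigma$-action on $\psi(\Bbb{H}) \subset Ver$. After conjugating by $\gamma_0$, Lemma \ref{c:liedim}(\ref{l:5})--(\ref{l:6}) identifies $\psi(\Bbb{H})$ with a connected component of $Ver \cap \Bbb{H}^2_\Bbb{C}$, and Lemma \ref{l:prv} provides a $\PO^+(2,1)$-equivariant homeomorphism $\Pi : Ver \cap \Bbb{H}^2_\Bbb{C} \to \Bbb{H}^2_\Bbb{R}$. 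Composing, we obtain a $\Sigma$-equivariant homeomorphism $\Bbb{H} \to \Bbb{H}^2_\Bbb{R}$, under which the $\Sigma$-action on $\Bbb{H}$ corresponds to the $H$-action on $\Bbb{H}^2_\Bbb{R}$. Since $H \cong \pi_1(\Sigma_g)$ is a discrete torsion-free subgroup of $\PO^+(2,1)$, the classical theory of Fuchsian groups (a finitely generated discrete subgroup of $\PSL(2,\Bbb{R})$ isomorphic to a closed surface group of genus $\geq 2$ is cocompact, since Fuchsian groups of the second kind are free by Nielsen and non-cocompact groups of the first kind contain parabolics) gives that $\Bbb{H}^2_\Bbb{R}/H$ is compact; transporting via the equivariant homeomorphism, $\Bbb{H}/\Sigma$ is compact.

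The main potential obstacle is this last cocompactness transfer: one must verify carefully that $\Pi \circ \psi$ is a homeomorphism $\Bbb{H} \to \Bbb{H}^2_\Bbb{R}$ equivariant for the correct actions after the $\gamma_0$-conjugation, and that "compact surface subgroup of $\PO^+(2,1)$" in the statement indeed forces cocompactness on $\Bbb{H}^2_\Bbb{R}$. Both points are classical, but they are the substantive steps once Theorem \ref{t:liedim} has supplied Fuchsianness.
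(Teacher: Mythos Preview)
Your argument is correct and, in the crucial backward step (transporting cocompactness between $\Bbb{H}$ and $\Bbb{H}^2_\Bbb{R}$ via the equivariant homeomorphism $\Pi\circ\psi$ supplied by Lemma~\ref{l:prv}), it coincides with the paper's commutative-diagram proof. Two differences are worth recording. First, your forward implication is more economical: since ``compact surface group'' is defined purely by isomorphism type, injectivity of $\iota$ together with Corollary~\ref{l:conpo} already suffices; the paper instead pushes a fundamental domain through $\Pi\circ\psi$ to exhibit cocompactness on $\Bbb{H}^2_\Bbb{R}$, which is more than the definition requires. Second, for Fuchsianness in the backward direction you invoke Theorem~\ref{t:liedim}, whereas the paper works under the standing convention of Section~\ref{s:rep} (so that $\iota\PSL(2,\Bbb{R})=\PO^+(2,1)$) and reads off $\Gamma\subset\PSL(2,\Bbb{R})$ directly from injectivity of $\iota$ once $\iota\Gamma\subset\PO^+(2,1)$; your route via Theorem~\ref{t:liedim} has the advantage of handling the ``conjugate to'' in the hypothesis without any such normalization.

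One small slip to fix: the equivariant homeomorphism you construct intertwines the $\Sigma$-action on $\Bbb{H}$ with the $\iota\Sigma$-action on $\Bbb{H}^2_\Bbb{R}$, not with the $H$-action (the conjugacy between $\iota\Gamma$ and $H$ lives in $\PSL(3,\Bbb{C})$ and need not preserve $\Bbb{H}^2_\Bbb{R}$). Simply apply the classical cocompactness fact for discrete surface subgroups of $\PSL(2,\Bbb{R})\cong\PO^+(2,1)$ directly to $\iota\Sigma$, which is equally such a subgroup; then the transport to $\Bbb{H}/\Sigma$ goes through as written.
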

\begin{proof} Let $\Gamma\subset \PSL(2,\Bbb{R})$ be a subgroup  acting properly, discontinuously, freely, and with compact quotient on $\Bbb{H}^+$. Let $R$  be a fundamental region for  the action of $\Gamma$ on $\Bbb{H}^+$. We may assume without loss of generality that $\psi(R)\subset\Bbb{H}^2_{\Bbb{C}}$. Thus 
 $\Pi\psi\overline{R} $ is a compact subset of  $\Bbb{H}^2_\Bbb{R}$  satisfying  
$\iota\Gamma\Pi\psi\overline{R}=\Bbb{H}^2_{\Bbb{R}}$
which shows that $\iota\Gamma$ is a discrete compact surface group of  $\PO^+(2,1)$.\\ 

Now let us assume that  $\iota\Gamma$ is  a discrete compact surface group  of  $\PO^+(2,1)$.  Then  $\Gamma\subset\PSL(2,\Bbb{R})$. 
Thus $\iota\Gamma\subset\PO^+(2,1)$ and  
$\Bbb{H}^2_{\Bbb{R}}/\iota\Gamma$ is a  compact surface, see \cite{tengren}. Now, consider the following commutative diagram

\begin{equation} 
\xymatrix{
  \Bbb{H}_\Bbb{R}^2 \ar[r]^{\Pi^{-1}} \ar
[d]^{q_1}    & Ver\cap \Bbb{H}^2_\Bbb{C} \ar[d]^{q_2} \ar[r]^{\psi^{-1}} & \Bbb{H}^+\ar[d]^{q_3}\\
\Bbb{H}^2_{\Bbb{R}}/\iota 
\Gamma
\ar[r]^{\widetilde {\Pi}}   & 
(Ver\cap \Bbb{H}^2_\Bbb{C})/ \iota\Gamma\ar[r]^{\widetilde \psi} &
\Bbb{H}^+/\Gamma
} 
\end{equation}
where $q_1,q_2,q_3$ are the quotient maps,
 $\widetilde {\Pi}(x)=q_2\Pi^{-1}q_1^{-1}x$, and 
 $\widetilde\psi(x)=q_3\psi^{-1}q_2^{-1}(x)$. By  Lemma \ref{l:prv}, we conclude that $\Bbb{H}^2_{\Bbb{R}}/\iota\Gamma,(Ver\cap \Bbb{H}^2_\Bbb{C})/\iota\Gamma,\Bbb{H}^+/\Gamma$ are homeomorphic compact surfaces, which concludes the proof.
\end{proof}

\section*{Proof of theorem \ref{t:main2}}
\begin{proof}
If $\Gamma\subset\PO^+(2,1)$ is a discrete compact surface  group, then  by  Lemma 
\ref{l:conpo} we can assume that there is a $\Sigma\subset\PSL(2,\Bbb{R})$ such that   $\iota\Sigma=\Gamma$. By  Theorem  \ref{t:rf} we know that  $\Bbb{H}/\Sigma$ is a compact Riemann  surface. 
 From  the classic theory of quasi-conformal maps, see  \cite{lipa1, lipa2},  it is known that  there   is a sequence of quasi-conformal maps 
 $(q_n:\widehat{\Bbb{C}}\rightarrow\widehat{\Bbb{C}})$ such that
$q_n\xymatrix{\ar[r]_{n\rightarrow\infty}&}Id$  and  
$\Sigma_n=q_n\Sigma q_n^{-1}$ is a quasi-Fuchsian group, which can not be conjugated to a Fuchsian one. In consequence   $\Gamma_n=\gamma_0\iota\Sigma_n\gamma_0^{-1}$ is  complex Kleinian  and neither conjugate to a subgroup of  $\PU(2,1)$  nor  $\PSL(3,\Bbb{R})$, which concludes the proof.  
\end{proof}

Now the following result is trivial.

\begin{corollary}
There are complex Kleinian groups acting on $\Bbb{P}^2_{\Bbb{C}}$ which are not conjugate to  either a complex hyperbolic group or a virtually affine group.
\end{corollary}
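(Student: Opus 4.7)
The plan is to reduce the corollary directly to Theorem \ref{t:main2} together with the proposition (proved in Section \ref{s:gever}) which asserts that for any non-elementary discrete $\Gamma \subset \PSL(2,\Bbb{C})$, the group $\iota\Gamma$ has no invariant proper projective subspace of $\Bbb{P}^2_{\Bbb{C}}$. First, fix a compact orientable surface $\Sigma_g$ with $g\geq 2$ and a faithful discrete representation $\rho_0\colon \Pi_1(\Sigma_g)\to \PO^+(2,1)$; these exist by classical uniformization. Applying Theorem \ref{t:main2}, we obtain a sequence of discrete faithful representations $\rho_n$ into $\PSL(3,\Bbb{C})$ whose images $\Gamma_n$ are complex Kleinian groups acting irreducibly on $\Bbb{P}^2_{\Bbb{C}}$ and which are not conjugate to any subgroup of $\PU(2,1)$ or $\PSL(3,\Bbb{R})$. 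In particular, no $\Gamma_n$ is conjugate to a complex hyperbolic group.

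The remaining task is to show that $\Gamma_n$ is not virtually affine. Recall that a group is virtually affine precisely when it has a finite-index subgroup preserving some affine chart of $\Bbb{P}^2_{\Bbb{C}}$, equivalently, a finite-index subgroup leaving some complex line (or point) invariant. By construction, $\Gamma_n = \gamma_0 \iota(\Sigma_n)\gamma_0^{-1}$, where $\Sigma_n \subset \PSL(2,\Bbb{C})$ is a quasi-Fuchsian deformation of a cocompact Fuchsian group, hence $\Sigma_n$ is discrete and non-elementary. Any finite-index subgroup $\Gamma_n'$ of $\Gamma_n$ corresponds, via $\gamma_0 \iota(\cdot)\gamma_0^{-1}$, to a finite-index subgroup $\Sigma_n'$ of $\Sigma_n$. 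Since $\Sigma_n$ is a surface group of genus $\geq 2$, any finite-index subgroup is again non-elementary.

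Now I invoke the proposition of Section \ref{s:gever}: as $\Sigma_n'$ is non-elementary, $\iota(\Sigma_n')$ preserves no proper projective subspace of $\Bbb{P}^2_{\Bbb{C}}$; conjugating by $\gamma_0$, the same holds for $\Gamma_n'$. Thus no finite-index subgroup of $\Gamma_n$ leaves a point or a line invariant, so $\Gamma_n$ is not virtually affine. Combined with the preceding paragraph, $\Gamma_n$ is a complex Kleinian group conjugate to neither a complex hyperbolic subgroup nor a virtually affine group, which establishes the corollary.

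The only delicate point in this plan is the passage from ``$\Gamma_n$ not conjugate to a subgroup of $\PU(2,1)$'' to ``$\Gamma_n$ not conjugate to a complex hyperbolic group''; this is immediate once one fixes the convention (used throughout the paper) that a complex hyperbolic subgroup of $\PSL(3,\Bbb{C})$ is, by definition, one that leaves a complex ball invariant, i.e.\ is conjugate into $\PU(2,1)$. Everything else in the argument is a direct citation of Theorem \ref{t:main2} and the irreducibility proposition of Section \ref{s:gever}, so no substantial new obstacle arises.
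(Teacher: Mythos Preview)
Your proposal is correct and follows the same route as the paper, which simply declares the corollary ``trivial'' after Theorem~\ref{t:main2}. You add the one detail the paper leaves implicit: ruling out \emph{virtually} affine requires irreducibility not just of $\Gamma_n$ but of every finite-index subgroup, which you correctly obtain by pulling back to a non-elementary subgroup of $\PSL(2,\Bbb{C})$ and invoking the irreducibility proposition of Section~\ref{s:gever}.
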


 The authors would like to thank  J. Seade  for 
fruitful conversations.  Also we would like to thank the  staff of UCIM at UNAM for their kindness and help. 

\bibliographystyle{amsplain}

\end{document}